\numberwithin{equation}{section}
\def\namedlabel#1#2{\begingroup
    #2%
    \def\@currentlabel{#2}%
    \phantomsection\label{#1}\endgroup
}
\newtheorem{Theorem}{Theorem}
\newtheorem{Lemma}{Lemma}
\newtheorem{Proposition}{Proposition}	
\theoremstyle{definition}
\newtheorem{Remark}{Remark}	 
\newtheorem{Definition}{Definition} 
\numberwithin{equation}{section}
\newcommand{\C}{\mathbb{C}} 
\newcommand{\R}{\mathbb{R}} 
\newcommand{\N}{\mathbb{N}} 
\renewcommand{\Re}{\operatorname{Re}}
\renewcommand{\Im}{\operatorname{Im}}
\newcommand{\iu}{\mathrm{i}}
\newcommand{\eu}{\mathrm{e}}
\newcommand{\eps}{\varepsilon}
\newcommand{\ub}{\mathbf{u}}
\newcommand{\ubu}{\underline{\mathbf{u}}}
\newcommand{\El}{\mathcal{L}}
\newcommand{\per}{\textup{per}}
\DeclareMathOperator{\spann}{span}
\DeclareMathOperator{\sech}{sech}
\newcommand\un[1]{\underline{#1}}
\title{Existence and stability of soliton-based frequency combs in the Lugiato-Lefever equation}
\author{Lukas Bengel$^*$ and Bj\"orn de Rijk\thanks{Department of Mathematics, Karlsruhe Institute of Technology, Englerstra\ss e 2, 76131 Karlsruhe, Germany; \texttt{lukas.bengel@kit.edu}, \texttt{bjoern.de-rijk@kit.edu}}}
\begin{document}
\maketitle

\begin{abstract} 
Kerr frequency combs are optical signals consisting of a multitude of equally spaced excited modes in frequency space. They are generated by converting a continuous-wave pump laser within an optical microresonator. It has been observed that the interplay of Kerr nonlinearity and dispersion in the microresonator can lead to a stable optical signal consisting of a periodic sequence of highly localized ultra-short pulses, resulting in broad frequency spectrum. The discovery that stable broadband frequency combs can be generated in microresonators has unlocked a wide range of promising applications, particularly in optical communications, spectroscopy and frequency metrology. In its simplest form, the physics in the microresonator is modeled by the Lugiato-Lefever equation, a damped nonlinear Schr\"odinger equation with forcing. In this paper we demonstrate that the Lugiato-Lefever equation indeed supports arbitrarily broad Kerr frequency combs by proving the existence and stability of periodic solutions consisting of any number of well-separated, strongly localized and highly nonlinear pulses on a single periodicity interval. We realize these periodic multiple pulse solutions as concatenations of individual bright cavity solitons by phrasing the problem as a reversible dynamical system and employing results from homoclinic bifurcation theory. The spatial dynamics formulation enables us to harness general results, based on Evans-function techniques and Lin's method, to rigorously establish diffusive spectral stability. This, in turn, yields nonlinear stability of the periodic multipulse solutions against localized and subharmonic perturbations.

\paragraph*{Keywords.} Lugiato-Lefever equation, periodic multipulse solutions, spectral analysis, nonlinear stability, bifurcation theory\\
\textbf{Mathematics Subject Classification (2020).} Primary, 35B10, 35Q55, 35B35; Secondary, 35C08, 34C23
\end{abstract}



\section{Introduction}

In this paper we rigorously construct periodic multipulse solutions to the Lugiato-Lefever equation and determine their spectral and nonlinear stability. The Lugiato-Lefever equation (LLE) is a damped and forced nonlinear Schr\"odinger equation given by
\begin{align} \label{LLE_intro}
    \iu u_t = -d u_{xx} + \zeta u - |u|^2 u - \iu u + \iu f,
\end{align}
where $u\colon\R\times\R \to \C$ is a complex-valued function, $d \neq 0$ denotes the dispersion, $\zeta \in \R$ is a detuning parameter and $f > 0$ represents the forcing. The LLE was derived from Maxwell's equations in~\cite{Lugiato1987Spatial} to describe the optical field in a dissipative and nonlinear cavity filled with a Kerr medium and subjected to a continuous-wave laser pump. As such it serves as a canonical model, see~\cite{Chembo2017} and further references therein, for Kerr frequency comb generation in continuous-wave laser driven microresonators, which are microscopic ring- or disk-shaped cavities that confine light by circulating it in a closed path and enhance the interaction of light through resonance. Kerr frequency combs, which are optical signals whose frequency spectrum is carved into a series of regularly spaced $\delta$-functions, arise due to four-wave mixing mediated by the nonlinear Kerr effect in the microresonator. 

Over the past decades the generation of combs with broad frequency spectrum has become the subject of intensive research, mainly due to the fact that such broad bandwidth frequency combs have revolutionized the precision and accuracy with which different optical transition frequencies can be measured, a discovery that was awarded with the Nobel prize in physics and has promising applications to optical communications~\cite{Marin-Palomo2017}, broadband gas sensing~\cite{Schliesser2005}, spectroscopy~\cite{Coddington2008,Picque2019}, and frequency metrology~\cite{Udem2002}, to name but a few. The generation of broadband frequency combs in high-quality microresonators has sparked significant interest~\cite{DelHaye2007,Ferdous2011}, mainly due to the potential of chip-scale implementation, which facilitates the integration of frequency comb technology into applications outside the laboratory. As high sensitivity to noise is undesired for practical implementation, attention must be given to the stability of these combs. 

A breakthrough addressing the stability issue is the experimental realization~\cite{Brasch2016Photonic,Herr2014Temporal} of frequency combs comprised of a multitude of well-separated bright (cavity) solitons, which are remarkably stable thanks to a double balance between anomalous dispersion and the Kerr nonlinearity (which defines their shape) and between gain and dissipation (which defines their amplitude). The individual solitons correspond to ultrashort pulses whose frequency spectrum is broad and smooth, which, together with their stability properties, makes soliton-based frequency combs highly attractive for applications, see e.g.~\cite{Marin-Palomo2017,Weiner2017} and further references therein.

In experiments~\cite{Brasch2016Photonic,Herr2014Temporal} the number of solitons constituting the generated frequency comb turns out to be stochastic, but, once generated, the waveform is stable. That is, on a single periodicity interval $I \subset \R$, provided by the ring (or disk) shape of the microresonator, stable optical signals of any number $N \in \mathbb{N}$ of ultrashort, well-separated, pulses can be generated. In simulations in~\cite{Herr2014Temporal} of the microresonator system with parameters similar to the experimental setup a close-to-perfect match was found between the numerical solution and the formal approximate solution
\begin{align} \label{formal}
u(x) = \alpha_{CW} + \alpha \sum_{i = 1}^N \phi_{\theta}(x - X_i),
\end{align}
corresponding to $N \in \mathbb{N}$ solitons superposed on a background $\alpha_{CW} \in \C$. Here, $x \in I$ resembles the angular coordinate inside the resonator, $\alpha \in \C \setminus \{0\}$ denotes the amplitude, $X_i \in I$ represents the position of the $i$-th soliton and 
\begin{align} \label{bright_sol}
\phi_{\theta}(x) = \sqrt{2\zeta} \sech\left(\sqrt{\zeta}\, x\right) \eu^{\iu \theta}
\end{align}
is the well-known bright soliton with phase $\theta \in \R$ solving the focusing nonlinear Schr\"odinger (NLS) equation
\begin{align} \label{NLS}
    \iu u_t = -u_{xx} + \zeta u - |u|^2 u
\end{align}
with detuning parameter $\zeta > 0$. 

Subsequent bifurcation analyses~\cite{Godey2014Stability,ParraRivas2018BifurcationLocalized,Gaertner2019Bandwidth,Mandel2017Apriori} based on numerical continuation indicate that frequency combs consisting of a multitude of bright solitons on a periodicity interval can be found in the LLE~\eqref{LLE_intro} in the anomalous dispersion regime $d > 0$. More precisely, these numerical analyses suggest that, as a result of a snaking bifurcation, the LLE supports periodic pulse solutions comprised of any number of solitons on a single periodicity interval. 

\subsection{Main result}

In this paper we affirm the above experimental and numerical findings by proving that the LLE~\eqref{LLE_intro} supports stable soliton-based frequency combs, whose leading-order profiles are of the form~\eqref{formal} on a single periodicity interval. These periodic $N$-pulse solutions to~\eqref{LLE_intro} arise in the anomalous dispersion regime $d > 0$ with small damping and forcing. In this regime stable $1$-pulse solutions to~\eqref{LLE_intro} on $\R$ were constructed in~\cite{Bengel2024Stability} by bifurcating from the rotated bright soliton solution~\eqref{bright_sol} to the focusing NLS equation~\eqref{NLS}. Thus, we regard the LLE as a perturbed focusing NLS equation
\begin{align}\label{LLE}
    \iu u_t = -u_{xx} + \zeta  u - |u|^2 u + \eps\iu( -u+ f)
\end{align}
with parameters $\zeta, f > 0$ and $0 < \eps \ll 1$, where we have set the dispersion to $1$ by rescaling space. We note that, given a dispersion coefficient $d > 0$, solutions of~\eqref{LLE} are in $1$-to-$1$-correspondence with solutions of the original formulation~\eqref{LLE_intro} of the LLE, see Remark~\ref{rem:scalings}. 

Our main result may now be formulated as follows.

\begin{Theorem} \label{t:mainresult}
Fix $N \in \mathbb{N}$. Set $n = {\lfloor \frac{N}{2} \rfloor}$ and $\alpha_0 = N\! \mod 2 \in \{0,1\}$. Assume that $\zeta, f > 0$ and $\theta_0 \in \R$ obey $8 \zeta < \pi^2 f^2$, $\pi f \cos \theta_0 = 2 \sqrt{2 \zeta}$ and $\sin \theta_0 > 0$. Then, there exist constants $C_0,\eps_0 > 0$ such that for all $\eps \in (0,\eps_0)$, $\mathbf{k} = (k_1,\ldots,k_n) \in \mathbb{N}^n$ and $m \in \mathbb{N}$ there exist distances $\smash{T_{1,\eps}^{k_1}, \ldots, T_{n,\eps}^{k_n} > 0}$ and periods $L_{\mathbf{k}}^m > 0$ satisfying
\begin{align} \label{sum_period}
2 \sum_{i = 1}^n T_{i,\eps}^{k_i} < L_{\mathbf{k}}^m
\end{align}
such that equation~\eqref{LLE} admits a stationary smooth solution $u_{m,\mathbf{k},\eps} \colon \R \to \C$ enjoying the following properties:
\begin{itemize}
\item[(i)] \emph{\bf (Symmetry).} The solution $u_{m,\mathbf{k},\eps}$ is even, i.e., it holds $u_{m,\mathbf{k},\eps}(x) = u_{m,\mathbf{k},\eps}(-x)$ for all $x \in \R$, $\eps \in (0,\eps_0)$, $\mathbf{k} \in \N^n$ and $m \in \mathbb{N}$. 
\item[(ii)] \emph{\bf (Periodicity).} The solution $u_{m,\mathbf{k},\eps}$ is $\smash{L_{\mathbf{k},\eps}^m}$-periodic for each $\eps \in (0,\eps_0)$, $\mathbf{k} \in \N^n$ and $m \in \mathbb{N}$. For each fixed $\eps \in (0,\eps_0)$ and $\mathbf{k} \in \N^n$ the sequence $\smash{\{L_{\mathbf{k},\eps}^m\}_m}$ of periods is monotonically increasing and tends to $\infty$ as $m \to \infty$. 
\item[(iii)] \emph{\bf (Approximation).} On a single periodicity interval the solution $u_{m,\mathbf{k},\eps}$ is approximated by a superposition of $N$ rotated bright solitons of the form~\eqref{bright_sol} as
\begin{align*}
\left|u_{m,\mathbf{k},\eps}(x) - \alpha_0 \phi_{\theta_0}(x) - \sum_{i = 1}^n \left(\phi_{\theta_0}\left(x - T_{1,\eps}^{k_1} - \ldots - T_{i,\eps}^{k_i}\right) + \phi_{\theta_0}\left(x + T_{1,\eps}^{k_1} + \ldots + T_{i,\eps}^{k_i}\right)\right)\right| \leq C_0 \eps
\end{align*}
for $x \in \smash{[-\frac12 L_{\mathbf{k},\eps}^m,\frac12 L_{\mathbf{k},\eps}^m]}$, $\eps \in (0,\eps_0)$, $\mathbf{k} \in \N^n$ and $m \in \mathbb{N}$.
\item[(iv)] \emph{\bf (Pulse separation).} For all $i = 1,\ldots,n$ and $\eps \in (0,\eps_0)$ the sequence $\{T_{i,\eps}^{k}\}_k$ of pulse distances is monotonically increasing  with $T_{i,\eps}^k \to \infty$ as $k \to \infty$.
\item[(v)] \emph{\bf (Asymptotic orbital stability against subharmonic perturbations).} Let $\eps \in (0,\eps_0)$, $\mathbf{k} \in \N^n$ and $m, M \in \mathbb{N}$. There exist constants $C,\delta,\eta > 0$ such that for all $v_0 \in \smash{H^1_{\mathrm{per}}(0,ML_{\mathbf{k},\eps}^m)}$ with $\smash{\|v_0\|_{H^1_{\mathrm{per}}(0,ML_{\mathbf{k},\eps}^m)}} < \delta$ there exist a constant $\gamma \in \R$ and a global (mild) solution 
\begin{align*}u \in C\big([0,\infty),H^1_{\mathrm{per}}(0,ML_{\mathbf{k},\eps}^m)\big)\end{align*} 
of~\eqref{LLE} with initial condition $u(0) = u_{m,\mathbf{k},\eps} + v_0$ satisfying
\begin{align*}
|\gamma| \leq C\|v_0\|_{H^1_{\mathrm{per}}(0,ML_{\mathbf{k},\eps}^m)} , \qquad \|u(\cdot,t) - u_{m,\mathbf{k},\eps}(\cdot + \gamma)\|_{H^1_{\mathrm{per}}(0,ML_{\mathbf{k},\eps}^m)} \leq C \eu^{-\eta t} \|v_0\|_{H^1_{\mathrm{per}}}
\end{align*}
for $t \geq 0$.
\item[(vi)] \emph{\bf (Diffusive stability against localized perturbations).} Let $\eps \in (0,\eps_0)$ $\mathbf{k} \in \N^n$ and $m \in \mathbb{N}$. There exist constants $C,\delta> 0$ such that for all $v_0 \in L^1(\R) \cap H^4(\R)$ with $\|v_0\|_{L^1 \cap H^4} < \delta$ there exist functions 
\begin{align*}\gamma, v \in C\big([0,\infty),H^4(\R)\big) \cap C^1\big([0,\infty),H^2(\R)\big)\end{align*} 
satisfying $\gamma(0) = 0$ and $v(0) = v_0$ such that $u = u_{m,\mathbf{k},\eps} + v$ is the unique global classical solution of~\eqref{LLE} with $u(0) = u_{m,\mathbf{k},\eps} + v_0$. Moreover, the estimates
\begin{align*}
\|\gamma(t)\|_{L^2}, \|u(t) - u_{m,\mathbf{k},\eps}\|_{L^2} &\leq C (1+t)^{-\frac14} \|v_0\|_{L^1 \cap H^4},\\
\|u(\cdot,t) - u_{m,\mathbf{k},\eps}(\cdot + \gamma(\cdot,t))\|_{L^2} &\leq C (1+t)^{-\frac34} \|v_0\|_{L^1 \cap H^4}
\end{align*}
hold for $t \geq 0$.
\end{itemize}
\end{Theorem}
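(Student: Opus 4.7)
The plan is to reformulate the stationary LLE as a four-dimensional first-order ODE on $\R^4$, written in $(\Re u, \Im u, \Re u_x, \Im u_x)$, which is reversible under the involution combining $x \mapsto -x$ with complex conjugation. For small $\eps > 0$ the CW background perturbs to a hyperbolic equilibrium $u_\eps^*$ of saddle-focus type, and \cite{Bengel2024Stability} provides a symmetric $1$-pulse homoclinic $q_\eps$ to $u_\eps^*$ with transverse intersection of stable and unstable manifolds in the natural reversible codimension. A periodic $N$-pulse solution corresponds, on a fundamental interval, to a symmetric orbit that shadows $q_\eps$ near each of its $N$ bumps. To construct such an orbit I would invoke Lin's method: at each transition cross-section the mismatch lies in a one-dimensional complement, producing scalar jump equations of the form $A(T,\eps)\cos(\omega T + \varphi) + O(\eu^{-2\sqrt{\zeta}\,T})$ whose oscillatory exponentially small envelope generates a countable discrete family of roots $T_{i,\eps}^{k_i}$ and half-periods, labelled by $k_i$ and $m$. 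Reversibility together with the evenness requirement halves the free variables and jump conditions, and the resulting implicit-function problem yields claims (i)--(iv) simultaneously, including monotonicity and the divergence $T_{i,\eps}^{k} \to \infty$ as $k \to \infty$.

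For the spectral stability needed in (v) and (vi), I would linearize \eqref{LLE} around $u_{m,\mathbf{k},\eps}$. The resulting non-self-adjoint operator on $L^2(\R)$ has essential spectrum contained in $\{\Re \lambda \leq -\eps\}$ thanks to the damping term $-\eps u$. By Bloch--Floquet decomposition the $L^2(\R)$-spectrum equals a union of Floquet spectra $\sigma(\mathcal{L}_\xi)$ for $\xi \in [-\pi/L, \pi/L)$, and translation invariance forces $0 \in \sigma(\mathcal{L}_0)$. The technical core is to show that the only critical spectrum is a smooth curve $\lambda(\xi)$ tangent quadratically to $\iu \R$ at $\xi = 0$, i.e.\ diffusive spectral stability. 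For this I would run a spectral analogue of Lin's method near $\lambda = 0$: multipulse eigenfunctions decompose into pieces shadowing the small eigenfunctions of the $1$-pulse, and matching conditions reduce the eigenvalue problem to a finite-dimensional matrix pencil whose entries decay exponentially in the pulse distances $T_{i,\eps}^{k_i}$. This pencil can be analysed using Evans-function derivatives of $q_\eps$ already controlled by \cite{Bengel2024Stability}, and combined with the half-plane location of the essential spectrum it yields diffusive spectral stability of $u_{m,\mathbf{k},\eps}$.

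The nonlinear stability claims then follow from the spectral picture via established machinery. For subharmonic perturbations on $(0, ML_{\mathbf{k},\eps}^m)$ the spectrum is discrete, the only critical eigenvalue is the simple translation zero, and choosing a phase $\gamma$ to project out this mode yields exponential orbital stability by standard semigroup arguments. For localized perturbations on $\R$ I would use the modulated ansatz $u(x,t) = u_{m,\mathbf{k},\eps}(x + \gamma(x,t)) + \tilde v(x,t)$ to absorb the critical translation mode, derive a quasilinear system for $(\gamma, \tilde v)$, and run the now-standard nonlinear iteration in Bloch space, exploiting the $L^1 \cap H^4 \to L^p$ smoothing of the Bloch semigroup to obtain the claimed $(1+t)^{-1/4}$ and $(1+t)^{-3/4}$ rates.

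The main obstacle I anticipate is the spectral analysis in the second paragraph: verifying that all the small eigenvalues generated by the pulse concatenation sit strictly off $\iu \R$ (save the trivial translation eigenvalue, with quadratic tangency) requires fine control of exponentially small pulse interactions, and the combinatorics of the reduced matrix pencil grow with $N$, $\mathbf{k}$, and $m$. Once this spectral input is secured, the existence construction in the first paragraph and the nonlinear-stability arguments in the third paragraph are essentially applications of reversible homoclinic bifurcation theory and diffusive-stability frameworks for periodic wave trains, respectively.
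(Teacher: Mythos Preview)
Your overall strategy---spatial dynamics reformulation, reversible saddle-focus homoclinic, Lin's method for existence, Bloch decomposition plus a reduced finite-dimensional spectral problem for diffusive spectral stability, then off-the-shelf nonlinear stability machinery---is the same as the paper's. Two points of divergence are worth flagging.

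First, a small correction: the relevant reversibility involution is simply $x\mapsto -x$, realized on $(\Re u,\Im u,\Re u_x,\Im u_x)$ by $R=\mathrm{diag}(1,1,-1,-1)$; complex conjugation is \emph{not} a symmetry of the stationary problem once the damping/forcing terms $\eps\iu(-u+f)$ are present. Second, the paper proceeds in two steps rather than one: it first concatenates $N$ copies of the primary $1$-pulse into a nondegenerate symmetric $N$-homoclinic on $\R$ (via the reversible homoclinic bifurcation theory of Sandstede), and only then periodifies that $N$-homoclinic (via Vanderbauwhede's theorem). This decoupling lets each step cite an existing result verbatim, whereas your direct one-step construction of the periodic $N$-pulse would require redoing those arguments in a combined setting.

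The substantive gap is the one you yourself flag as ``the main obstacle,'' and it is more than a technicality. The saddle-focus oscillation produces, for every admissible set of pulse separations, a \emph{family} of $N$-pulse solutions, and for most members of that family some of the $N$ small eigenvalues bifurcating from the translational zero land in the open right half-plane. Stability is not inherited from the $1$-pulse; it must be \emph{selected}. The paper does this in two stages: at the $N$-homoclinic level it invokes the classification in Sandstede's work on reversible multipulses, which shows that among the countable family one can pick (corresponding to the parameter $\ell=N-1$) the branch for which all $N-1$ nontrivial small eigenvalues have negative real part; at the periodic level it invokes the leading-order formula $\lambda_0(\xi)\approx a(\cos\xi-1)\eu^{-2\alpha T}\sin(2\beta T+b)$ for the critical Bloch curve, which shows that diffusive spectral stability holds precisely on alternating intervals of the period $T$ (those with $\sin(2\beta T+b)>0$). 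Your proposal does not indicate how the reduced matrix pencil distinguishes the stable branches from the unstable ones, nor how the choice of $k_i$ and $m$ encodes that selection; without that, the spectral step does not close. Away from the origin, the paper controls the spectrum by combining an $\eps$-independent a-priori bound confining unstable spectrum to a compact set with an Evans-function perturbation argument transferring absence of point spectrum from the underlying $N$-pulse to the periodic train.
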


Theorem~\ref{t:mainresult} shows that for any $N \in \mathbb{N}$ there exist even periodic stationary solutions to~\eqref{LLE} composed of a superposition of $N$ bright solitons on a single periodicity interval. The period length, as well as the distance between the individual solitons, can be chosen arbitrarily large through the $n+1$ degrees of freedom $k_1,\ldots,k_n$ and $m$ in Theorem~\ref{t:mainresult}, where we set $\smash{n = \lfloor \frac{N}{2} \rfloor}$. More precisely, after fixing $k_1,\ldots,k_i \in \N$ so that the distances $T_{1,\eps}^{k_1},\ldots,T_{i,\eps}^{k_i}$ between the first $i \in \{1,\ldots,n\}$ solitons and their symmetric counterparts are set, there remain $n+1-i$ degrees of freedom, namely $k_{i+1},\ldots,k_n, m\in \N$, which can still be adjusted to make the distances between the remaining $N-2i$ solitons, as well as the period of the solution, arbitrarily large. In particular, for fixed parameters $\zeta,f > 0$, $N \in \mathbb{N}$ and $\eps > 0$, Theorem~\ref{t:mainresult} yields a $(n+1)$-parameter family of even periodic stationary $N$-pulse solutions of~\eqref{LLE}, whose pulse locations can be unequally spaced. Since any fixed spatial translate of a solution of~\eqref{LLE} is again a solution, we find that Theorem~\ref{t:mainresult} provides in fact a $(n+2)$-parameter family of periodic stationary solutions to~\eqref{LLE}.

Upon rescaling the spatial variable $x$, the optical field $u$, and the detuning and forcing parameters $\zeta,f > 0$ in~\eqref{LLE} and upon reintroducing the dispersion parameter $d > 0$, we can adjust the period length to match the (fixed) circumference of the microresonator and we can normalize the damping coefficient, see Remark~\ref{rem:scalings}. In particular, one finds after rescaling that the periodic $N$-pulse solutions in Theorem~\ref{t:mainresult} correspond to frequency comb solutions of~\eqref{LLE_intro} in the regime of large detuning, strong forcing and high anomalous dispersion. These frequency combs are comprised of ultrashort, well-separated, large-amplitude bright solitons and were thus numerically and experimentally observed in~\cite{Brasch2016Photonic,Herr2014Temporal,Godey2014Stability,ParraRivas2018BifurcationLocalized,Gaertner2019Bandwidth,Mandel2017Apriori}. In fact, the amplitude of the frequency comb solution to~\eqref{LLE_intro} can be made arbitrarily large by taking $\eps > 0$ sufficiently small in Theorem~\ref{t:mainresult}, cf.~Remark~\ref{rem:scalings}. On the other hand,  by fixing $\eps > 0$ and taking the parameters $k_1,\ldots,k_n$ and $m$ in Theorem~\ref{t:mainresult} sufficiently large, the individual solitons constituting the frequency comb can be arbitrarily localized, while maintaining the same amplitude. Therefore, their frequency spectrum can be made arbitrarily broad. 

The periodic multipulse solutions, or \emph{multipulse trains}, established in Theorem~\ref{t:mainresult}, exhibit the strongest stability properties attainable for (nonconstant) stationary periodic solutions of~\eqref{LLE}. Since any spatial translate corresponds to a co-periodic perturbation, $u_{m,\mathbf{k},\eps}$ cannot be asymptotically stable against subharmonic perturbations. However, Theorem~\ref{t:mainresult} shows that, except for a small spatial shift of the original periodic solution, the effect of subharmonic perturbations fades exponentially quickly in time. In addition, by Floquet-Bloch theory, cf.~\cite{Haragus2021Linear,Gardner1993Structure}, the linearization of~\eqref{LLE} about $u_{m,\mathbf{k},\eps}$ possesses, when posed on $L^2(\R)$, continuous spectrum, which, in the most stable situation, touches the origin due to translational invariance in a single quadratic tangency. It is well-known that such \emph{diffusively stable} spectrum, cf.~Definition~\ref{def:diffusive_spectral_stability}, leads to algebraic decay rates of perturbations, whose leading-order behavior is captured by a diffusively decaying spatio-temporal phase modulation satisfying a viscous Hamilton-Jacobi equation, cf.~\cite{Haragus2023Nonlinear,Zumbrun2024Forward,Sandstede2012,Doelman2009}. That is, the algebraic decay rates stated in Theorem~\ref{t:mainresult} are the best achievable in the case of localized perturbations. 

In summary, Theorem~\ref{t:mainresult} rigorously shows that the LLE admits for any number $N \in \mathbb{N}$ an $(\lfloor \frac{N}{2} \rfloor + 2)$-parameter family of frequency combs. These combs are periodic solutions of~\eqref{LLE_intro} consisting of $N$ well-separated, generally unequally spaced bright solitons on a single periodicity interval. The amplitude and frequency spectra of these solitions can be made arbitrarily large and broad, respectively. Moreover, the frequency combs exhibit the best attainable stability properties. These features are, as outlined above, of key importance for applications relying on frequency-comb technology. Moreover, as explained in~\S\ref{sec:embedding} below, Theorem~\ref{t:mainresult} is the first rigorous mathematical result establishing periodic \emph{multiple} pulse solutions of the LLE. In addition, the solutions in Theorem~\ref{t:mainresult} are the first far-from-equilibrium periodic solutions of the LLE, whose stability against localized perturbations and against subharmonic perturbations of any wavelength has been rigorously proven.

\begin{Remark} \label{rem:scalings}
Let $d > 0$. If $u(x,t)$ is a solution to~\eqref{LLE}, then the rescaled solution $\tilde{u}(x,t) = \eps^{-1/2} u((d\eps)^{-1/2}x,\eps^{-1} t)$ solves
\begin{align*}
\iu \tilde{u}_t = -d\tilde{u}_{xx} + \tilde{\zeta} \tilde{u} - |\tilde{u}|^2 \tilde{u} - \iu \tilde{u} + \iu \tilde{f}
\end{align*}
with $\tilde{\zeta} =\eps^{-1}\zeta$ and $\tilde{f} = \eps^{-1/2} f$. Hence, we find that solutions of~\eqref{LLE} are in $1$-to-$1$-correspondence with solutions of the original formulation~\eqref{LLE_intro} of the LLE with normalized damping coefficient and dispersion coefficient $d > 0$. In particular, the solutions $u_{m,\mathbf{k},\eps}$, established in Theorem~\ref{t:mainresult}, correspond to stable pulse train solutions of~\eqref{LLE_intro} of period $(d \eps)^{1/2} L_{\mathbf{k},\eps}^m$, whose amplitude can be made arbitrarily large by taking $\eps > 0$ sufficiently small. Moreover, choosing $d = 1/(\eps L_{\mathbf{k},\eps}^m)^{1/2}$ yields $1$-periodic multipulse solutions to~\eqref{LLE_intro}, whose individual pulses become highly localized and well-separated by taking $k_1,\ldots,k_n, m \in \mathbb{N}$ sufficiently large.
\end{Remark}

\subsection{Embedding in the mathematical literature} \label{sec:embedding}

Despite the importance of frequency combs to applications, there are relatively few mathematically rigorous studies of periodic solutions to the LLE. The first mathematical works~\cite{Miyaji2010Bifurcation,Delcey2018Periodic,Delcey2018Instability,Godey2017Bifurcation} focus on proving the existence of small amplitude periodic solutions of~\eqref{LLE_intro} by bifurcating from spatially homogeneous steady states. 
These stationary periodic solutions are weakly nonlinear patterns, i.e., their leading-order profile is a (co)sine wave superposed on the homogeneous background state. In particular, they do not exhibit broad frequency spectrum.

To date, far-from-equilibrium periodic solutions have, to the authors' best knowledge, only been established rigorously in~\cite{Mandel2017Apriori,Hakkaev2019Generation}. In~\cite{Mandel2017Apriori} global branches of stationary periodic solutions and bounds on their location in parameter space were obtained using global bifurcation theory. Yet, the results in~\cite{Mandel2017Apriori} do not provide any rigorous mathematical control on the profile or size of the periodic solutions away from the branch of homogeneous background states. 

In~\cite{Hakkaev2019Generation} stationary periodic solutions to~\eqref{LLE_intro} are constructed by bifurcating from the well-known one-parameter family of real-valued periodic dnoidal solutions of the focusing NLS equation~\eqref{NLS}. The homoclinic limit of the real dnoidal family is the bright soliton~\eqref{bright_sol} with $\theta \in \{0,\pi\}$. As the dnoidal waves approach the homoclinic limit, their profile thus consists of one strongly localized bright soliton on each periodicity interval. Consequently, the bifurcating solutions to the LLE resemble periodic $1$-pulse solutions, whose individual pulses exhibit broad frequency spectrum. These bifurcating periodic $1$-pulse solutions are however different from the ones constructed in this paper, because they are unstable against localized and long-wavelength subharmonic perturbations, whereas the pulse trains in Theorem~\ref{t:mainresult} are stable against localized perturbations and against subharmonic perturbations of any wavelength. We refer to Remark~\ref{rem:dnoidal} for further details.

As far as the authors are aware, the only class of periodic solutions to~\eqref{LLE_intro} whose stability against localized perturbations and against subharmonic perturbations of any wavelength has been rigorously established in the current literature~\cite{Haragus2023Nonlinear,Stanislavova2018Asymptotic}, are the small-amplitude, weakly nonlinear Turing patterns constructed in~\cite{Delcey2018Periodic,Delcey2018Instability}. This follows by the fact that they are diffusively spectrally stable, as proved in~\cite{Delcey2018Instability,Delcey2018Periodic}. On the other hand, spectral and nonlinear stability of small amplitude periodic solutions of~\eqref{LLE_intro} against co-periodic perturbations is obtained in~\cite{Miyaji2010Bifurcation,Miyaji2011Stability}. Finally, although the periodic solutions to~\eqref{LLE_intro} bifurcating from the family of dnoidal NLS-solutions are unstable against localized perturbations and large-wavelength subharmonic perturbations, their spectral and nonlinear stability against co-periodic perturbations is proven in~\cite{Hakkaev2019Generation,Stanislavova2018Asymptotic}, thereby confirming the formal asymptotic analysis presented in~\cite{Sun2018}.

\begin{Remark} \label{rem:dnoidal}

The one-parameter family of real-valued periodic dnoidal solutions of the focusing NLS equation~\eqref{NLS} can be extended to a two-parameter family of stationary periodic solutions through rotation. Indeed, if $u$ is a stationary solution of~\eqref{NLS}, so is $\eu^{\iu \theta} u$ for any $\theta \in \R$. For each fixed rotation angle $\theta \in \R$ the homoclinic limit of the family is given by the rotated bright solition~\eqref{bright_sol}.  

It is a classical result that any nonconstant real-valued stationary solution of period $T > 0$ of the focusing NLS equation~\eqref{NLS} is long-wavelength (or sideband) unstable~\cite{Rowlands1974Stability}, i.e., it is spectrally unstable against $MT$-periodic perturbations for $M \in \N$ sufficiently large. In particular, any real-valued dnoidal solution of~\eqref{NLS} is long-wavelength unstable. Since the spectrum is unaffected by the rotation $u \mapsto \eu^{\iu \theta} u$, it follows that the full two-parameter family of dnoidal waves is sideband unstable. 

The long-wavelength instability is inherited by periodic solutions of the LLE bifurcating from any member of the two-parameter dnoidal family. The reason is as follows. The linearization of~\eqref{LLE_intro} or~\eqref{NLS} about a stationary $T$-periodic solution posed on $L^2_{\mathrm{per}}(0,MT)$ has compact resolvent for any $M \in \mathbb{N}$ due to the compact embedding of its domain $H^2_{\mathrm{per}}(0,MT)$ into $L^2_{\mathrm{per}}(0,MT)$ by the Rellich–Kondrachov theorem. Consequently, its spectrum consists of isolated eigenvalues of finite multiplicity. It is well-known, cf.~\cite[Section~4.3.5]{Kato1995Perturbation}, that a finite set of eigenvalues of finite multiplicity changes continuously under bounded perturbations. Therefore, stationary $T$-periodic solutions to~\eqref{LLE_intro} bifurcating from any $T$-periodic dnoidal solution of~\eqref{NLS} are long-wavelength unstable. By Floquet-Bloch theory, cf.~\cite{Gardner1993Structure,Haragus2021Linear}, the spectrum of the linearization of~\eqref{LLE_intro} about a $T$-periodic stationary wave posed on $L^2(\R)$ arises by taking the union over $M \in \N$ of all spectra of the linearizations of~\eqref{LLE_intro} about the wave posed on $L^2_{\mathrm{per}}(0,MT)$. Hence, any long-wavelength unstable periodic stationary solution is also spectrally unstable against localized perturbations. We conclude that the periodic stationary solutions of the LLE, which were established in~\cite{Hakkaev2019Generation} by bifurcating from the dnoidal waves, are (spectrally) unstable against localized perturbations and against $MT$-periodic perturbations for $M \in \mathbb{N}$ sufficiently large. This is confirmed by numerical simulations in~\cite{Sun2018}. Interestingly, these simulations also indicate that, outside of a neighborhood of the bifurcation point, the $L^2_{\mathrm{per}}(0,MT)$-spectrum might stabilize for given $M \in \mathbb{N}$.

However, the homoclinic limits, given by the rotated bright solitons~\eqref{bright_sol}, of the two-parameter dnoidal family are spectrally and orbitally stable~\cite{Cazenave1982Orbital,Weinstein1985Modulational,Weinstein1986Lyapunov} as solutions to the NLS equation~\eqref{NLS} against localized perturbations. It has been shown in~\cite{Bengel2024Stability} that the spectral stability is inherited by some of the bifurcating soliton solutions of~\eqref{LLE}, see also Theorem~\ref{thm:1-pulse}. In this paper, we exploit the spectral stability of the soliton solutions to construct periodic multipulse solutions, which are spectrally and nonlinearly stable against localized perturbations and against subharmonic perturbations of any wavelength.
\end{Remark}

\subsection{Dynamical systems approach} \label{sec:approach}

The results presented in this paper are the outcome of a dynamical systems approach to analyze the existence and spectral stability problems for stationary solutions to~\eqref{LLE}. These problems, being independent of time, may be written as first-order dynamical systems of ordinary differential equations in the spatial variable $x$. Due to the reflection symmetry $x \mapsto -x$ present in~\eqref{LLE}, these dynamical systems admit a reversible symmetry.

Our basic ingredients for the construction of periodic multipulse solution to~\eqref{LLE} are the $1$-pulse solutions bifurcating from the one-parameter family of rotated bright NLS solitons~\eqref{bright_sol}. These primary $1$-pulse solutions to~\eqref{LLE} were rigorously established in~\cite{Gaertner2020Soliton,Gasmi2022Lugiato,Bengel2024Stability} and their spectral and nonlinear stability was analyzed in~\cite{Bengel2024Stability} using Krein index counting and analytic perturbation theory. Upon formulating the existence problem as a reversible dynamical system, the $1$-pulses correspond to symmetric nondegenerate homoclinics to a saddle-focus equilibrium. Homoclinic bifurcations results~\cite{Sandstede1999Instability,Haerterich1998Cascades,Champneys1994Homoclinic} for reversible dynamical systems, which rely on Shil’nikov analysis or a Lyapunov-Schmidt reduction method called Lin's method~\cite{Lin1990Melnikovs,Sandstede1993Verzweigungstheorie}, allow us to concatenate any number $N \in \mathbb{N}$ of these nondegenerate homoclinics, yielding so-called \emph{$N$-homoclinics}, which are again nondegenerate and symmetric. The $N$-homoclinics correspond to even stationary multiple pulse solutions of~\eqref{LLE} comprised of $N$ well-separated pulses, which can, in turn, be approximated by the rotated bright NLS solitons~\eqref{bright_sol}. The spectral stability of these $N$-pulse solutions follows by combining results from~\cite{Sandstede1998Stability,Sandstede1999Instability,BengeldeRijk} with a-priori bounds on the spectrum. More specifically, in~\cite{Sandstede1998Stability} general eigenvalue problems arising in the spectral stability analysis of $N$-pulses bifurcating from a formal concatenation of $N$ primary pulses are studied using Lin's method, providing leading-order control over the $N$ small eigenvalues bifurcating from the translational eigenvalue residing at the origin. An application of the theory of~\cite{Sandstede1998Stability} to reversible systems can be found in~\cite{Sandstede1999Instability}, see also~\cite{Sandstede1997Fibers}. On the other hand, the Evans-function analysis in~\cite{BengeldeRijk} yields that the absence of eigenvalues in compact regions of the spectral plane associated with the primary $1$-pulse solutions is inherited by the bifurcating $N$-pulse solutions. We note that the spectral stability of the multipulse solutions implies their asymptotic orbital stability through standard arguments relying on high-frequency resolvent bounds established in~\cite{Bengel2024Stability}. We emphasize that these rigorous existence and stability results of multiple pulse solutions to the LLE are novel and interesting in their own right. We refer to~\S\ref{ssec:ex_N-pulse} and~\S\ref{ssec:stab_N-pulse} for the precise statements.

Having established a nondegenerate $N$-homoclinic in the dynamical systems formulation of the existence problem, we employ the homoclinic bifurcation results in~\cite{Vanderbauwhede1992Homoclinic}, which again rely on Lin's method, to find nearby periodic orbits, which have large spatial periods $T > 0$ and are reversibly symmetric. The bifurcating periodic orbits correspond to a family of periodic $N$-pulse solutions to~\eqref{LLE}. We study the spectral stability of these $T$-periodic pulse solutions against localized perturbations and subharmonic perturbations using~\cite{Sandstede2001Stability,BengeldeRijk,Gardner1997Spectral}. There are $M$ eigenvalues of the linearization of~\eqref{LLE} about the multipulse train posed on $L^2_{\mathrm{per}}(0,MT)$ bifurcating from each isolated eigenvalue associated with the underlying multipulse solution, cf.~\cite{Gardner1993Structure}. Since there is an eigenvalue associated with the underlying multipulse solution at the origin due to translational invariance, the multipulse train can be spectrally unstable even in the case of spectral stability of the underlying multipulse. In the case of localized perturbations each eigenvalue associated with the underlying multipulse solution yields a bifurcating spectral curve consisting of the union of eigenvalues of the linearizations posed on $L^2_{\mathrm{per}}(0,MT)$ for each $M \in \N$. Leading-order control on the bifurcating eigenvalues in the case of subharmonic perturbations and on the bifurcating spectral curve in the case of localized perturbations close to the origin is provided by results in~\cite{Sandstede2001Stability}, which rely on Lin's method and Floquet-Bloch theory. On the other hand, the Evans function analysis in~\cite{BengeldeRijk,Gardner1997Spectral} yield that the absence of eigenvalues in compact regions of the spectral plane associated with the underlying multipulse solution is inherited by the bifurcating multipulse trains. Combining the latter with spectral a-priori bounds then leads to the desired diffusive spectral stability result for the periodic multipulse solution. Finally, diffusive spectral stability implies nonlinear stability against localized perturbations and against subharmonic perturbations of any wavelength, cf.~\cite{Haragus2023Nonlinear,Haragus2024Nonlinear,Stanislavova2018Asymptotic}.

\subsection{Outline of paper}

The remainder of this paper is structured as follows. In~\S\ref{sec:soliton-based} we collect previous results on the existence and spectral stability of the $1$-pulse solutions to the LLE bifurcating from the rotated bright NLS-solitons. Moreover, we formulate the existence problem as a dynamical system and establish multiple and periodic pulse solutions. The spectral and nonlinear stability analysis of the multiple and periodic pulse solutions, as well as the proof of our main result, Theorem~\ref{t:mainresult}, can be found in~\S\ref{sec:stability}. 

\paragraph*{Acknowledgments.}  This project is funded by the Deutsche Forschungsgemeinschaft (DFG, German Research Foundation) -- Project-ID 258734477 -- SFB 1173

\section{Soliton-based pulse solutions} \label{sec:soliton-based}

In this section we establish multiple and periodic pulse solutions to~\eqref{LLE}. As outlined in~\S\ref{sec:approach}, the fundamental building blocks of these pulse solutions are the stationary $1$-pulse solutions of~\eqref{LLE}, constructed in~\cite{Gasmi2022Lugiato,Gaertner2020Soliton,Bengel2024Stability} by bifurcating from the $1$-parameter family of rotated bright NLS solitions~\eqref{bright_sol}. After we have formulated the existence problem as a dynamical system, we collect the relevant properties of these primary $1$-pulse solutions from~\cite{Bengel2024Stability} and show that they correspond to nondegenerate symmetric homoclinics in the dynamical system. Then, we employ  homoclinic bifurcation
results from~\cite{Sandstede1993Verzweigungstheorie,Sandstede1999Instability,Vanderbauwhede1992Homoclinic} to obtain the desired multiple and periodic pulse solutions.

\subsection{Spatial dynamics formulation}

Multiplying~\eqref{LLE} by $-\iu$ and decomposing $u$ into its real and imaginary part yields the real two-component system
\begin{equation}\label{LLE_system}
    \ub_t = J (- \ub_{xx} + \zeta \ub - |\ub|^2\ub ) + \eps (-\ub + \mathbf{F}),
\end{equation}
written in vector notation $\ub = (\Re(u), \Im(u))^\top$, where we denote
\begin{align*}
    J = 
    \begin{pmatrix}
        0 & 1 \\ -1 & 0
    \end{pmatrix}, \qquad
    \mathbf{F} =
    \begin{pmatrix}
        f \\ 0
    \end{pmatrix}
\end{align*}
and where $|\ub| = \sqrt{\ub_1^2 + \ub_2^2}$ is the usual Euclidean norm. The advantage of the fomulation~\eqref{LLE_system} over~\eqref{LLE} is that the nonlinearity is now a (Fr\'echet) differentiable function of the vector $\ub$. We note that any real-valued solution $\ub = (\ub_1,\ub_2)^\top$ to~\eqref{LLE_system} gives rise to a complex-valued solution to~\eqref{LLE} by setting $u = \ub_1 + \iu \ub_2$.

Stationary solutions of~\eqref{LLE_system} solve the second-order system
\begin{align}\label{LLE_system_existence}
    J (- \ub_{xx} + \zeta \ub - (\ub_1^2+\ub_2^2)\ub ) +\eps ( -\ub + \mathbf{F}) = 0
\end{align}
of ordinary differential equations. By introducing the variable
\begin{align*}
    U = (\ub_1, \ub_2, \ub_1', \ub_2')^\top
\end{align*}
the existence problem~\eqref{LLE_system_existence} can be written as a dynamical system on $\R^4$ given by
\begin{align}\label{LLE_DS}
    U' = F(U;\eps),
\end{align}
where $F \colon \R^4 \times \R \to \R^4$ is the smooth nonlinear function defined by
\begin{align*}
     F(U;\eps) =
    \begin{pmatrix}
        U_3 \\ U_4 \\
        \zeta U_1 + \eps U_2 -(U_1^2 + U_2^2) U_1 \\
        \zeta U_2 - \eps U_1 -(U_1^2 + U_2^2) U_2 + \eps f
    \end{pmatrix}.
\end{align*}
The reflection symmetry $x \mapsto -x$ of~\eqref{LLE} yields that the first-order system \eqref{LLE_DS} of ordinary differential equations is \emph{reversible} for the linear involution
\begin{align*}
    R = 
    \begin{pmatrix}
        1  & 0 & 0 & 0 \\
        0 & 1 & 0 & 0 \\
        0 & 0 & -1 & 0 \\
        0 & 0 & 0 & -1
    \end{pmatrix} \in \R^{4 \times 4}, \qquad R^2 = R.
\end{align*}
That is, the relation $RF(U;\eps) = - F(RU;\eps)$ holds true for all $U \in \R^4$ and $\eps \in \R$. A solution $U$ of \eqref{LLE_DS} is called \emph{symmetric} if we have $RU(-x) = U(x)$ for all $x \in \R$. We note that symmetric solutions of~\eqref{LLE_DS} give rise to even stationary solutions of~\eqref{LLE}. Since $F$ is smooth, we find that all solutions of~\eqref{LLE_DS} are smooth by standard local existence theory for ODEs. Consequently, all stationary bounded solutions of~\eqref{LLE_system} are smooth. 

Taking advantage of the differentiability of the nonlinearity in~\eqref{LLE_system}, we can linearize system~\eqref{LLE_system} about a bounded stationary solution $\underline{\ub} = (\underline{\ub}_1,\underline{\ub}_2)^\top \colon \R \to \R^2$. The linearization of~\eqref{LLE_system} about $\ubu$ equals $\mathcal{L}(\underline{\ub}) - \eps$, where $L(\underline{\ub}),\mathcal{L}(\underline{\ub}) \colon H^2(\R) \subset L^2(\R) \to L^2(\R)$ are the closed and densely defined operators given by
\begin{align*}
	L(\underline{\ub})= - \partial_x^2 +\zeta - 
	\begin{pmatrix}
		3 \underline{\ub}_1^2 + \underline{\ub}_2 ^2 & 2 \underline{\ub}_1 \underline{\ub}_2 \\
		2 \underline{\ub}_1 \underline{\ub}_2 &  \underline{\ub}_1^2 + 3\underline{\ub}_2 ^2
	\end{pmatrix}, \qquad \mathcal{L}(\underline{\ub}) = JL(\underline{\ub}).
\end{align*}

\subsection{Primary 1-pulse solutions}

Stationary $1$-pulse solutions of~\eqref{LLE_system} were constructed in~\cite{Gaertner2020Soliton,Gasmi2022Lugiato,Bengel2024Stability} by bifurcating from the rotated bright soliton solution~\eqref{bright_sol} of the NLS equation~\eqref{NLS}, where the rotation parameter $\theta \in \R$ has to satisfy the bifurcation equation $\pi f \cos\theta = 2 \sqrt{2\zeta}$. In the following we collect the relevant details from~\cite{Bengel2024Stability} on the existence and spectral stability of these $1$-pulse solutions, which will serve as building blocks for the upcoming construction of stationary multipulse solutions to~\eqref{LLE_system}. In order to formulate the result from~\cite{Bengel2024Stability}, we first state the definition of spectral stability for stationary pulse solutions of~\eqref{LLE_system}, as well as the definition of spectral instability.

\begin{Definition}
Let $\ub_\infty \in \R^2$ and let $\ubu \colon \R \to \R^2$ be a smooth stationary solution to~\eqref{LLE_system} such that $\ubu(x)$ converges to $\ub_\infty$ as $x \to \pm \infty$. The stationary pulse solution $\ubu$ to~\eqref{LLE_system} is \emph{spectrally stable} if there exists $\tau > 0$ such that
\begin{align*}
    \sigma(\El(\ubu)-\eps) \subset \{\lambda \in \C : \Re(\lambda) \leq -\tau\} \cup \{0\}
\end{align*}
and $0$ is an algebraically simple eigenvalue of $\El(\ubu) - \eps$.
\end{Definition}

\begin{Definition}
A smooth stationary bounded solution $\ubu \colon \R \to \R^2$ to~\eqref{LLE_system} is  \emph{spectrally unstable} if there exists $\lambda \in \sigma(\El(\ubu)-\eps) $ with $\Re(\lambda) > 0$. 
\end{Definition}

It has been proved in~\cite[Theorem~3]{Bengel2024Stability} that spectral stability yields nonlinear stability of pulse solutions to~\eqref{LLE_system} against $L^2$-localized perturbations, see also Theorem~\ref{thm:nonstab_Npulse}. We are now ready to summarize the existence and spectral stability results on $1$-pulse solutions from~\cite{Bengel2024Stability}. 

\begin{Theorem}[\!{\!\cite[Theorem 1,2]{Bengel2024Stability}}]\label{thm:1-pulse}
Assume that $\zeta,f>0$ and $\theta_0 \in \R$ obey $f^2 \pi^2> 8 \zeta$, $\pi f \cos\theta_0 = 2 \sqrt{2\zeta}$ and $\sin\theta_0 \neq 0$. Then, there exist $C_0, \eps_0 > 0$ such that for all $\eps \in (0,\eps_0)$ there exist an asymptotic state $\ub_{\infty,\eps} \in \R^2$ and an even smooth solution $\ubu_\eps \colon \R \to \R^2$ of \eqref{LLE_system_existence} satisfying
\begin{align} \label{estimates_1_pulse}
    \left\|\ubu_\eps - \boldsymbol{\phi}_{\theta_0}\right\|_{L^\infty}, |\ub_{\infty,\eps}| \leq C_0\eps, \qquad \ubu_\eps - \ub_{\infty,\eps} \in H^2(\R),
\end{align}
where we denote $\boldsymbol{\phi}_{\theta} = \phi_0 (\cos \theta,\sin \theta)^\top$ and $\phi_0$ is the bright soliton given by~\eqref{bright_sol}. The spectral stability of the solution $\ubu_\eps$ depends on the rotational angle $\theta_0$ as follows.
\begin{itemize}
    \item[(i)] If $\sin\theta_0>0$, then $\ubu_\eps$ is spectrally stable as a stationary pulse solution to~\eqref{LLE_system}.
    \item[(ii)] If $\sin\theta_0<0$, then $\ubu_\eps$ is spectrally unstable as a stationary solution to~\eqref{LLE_system}.
\end{itemize}
In both cases $\ker(\El(\ubu_\eps)-\eps)$ is spanned by $\ubu_\eps'$.
\end{Theorem}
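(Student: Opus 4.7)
The plan is to view~\eqref{LLE_system_existence} as a small perturbation of the stationary focusing NLS equation, for which $\boldsymbol{\phi}_{\theta_0}$ is an explicit even solution, and to perform a Lyapunov-Schmidt reduction in the space of even $H^2(\R,\R^2)$-functions. This choice bakes in the claimed evenness of $\ubu_\eps$ and automatically eliminates the odd kernel direction $\boldsymbol{\phi}_{\theta_0}'$ of the NLS linearization $\mathcal{L}(\boldsymbol{\phi}_{\theta_0})$. I first fix the asymptotic state $\ub_{\infty,\eps}$ by applying the implicit function theorem to the algebraic equation $J(\zeta\ub - |\ub|^2\ub) + \eps(-\ub + \mathbf{F}) = 0$ for spatially constant stationary solutions of~\eqref{LLE_system_existence}, which at $\eps=0$ admits $\ub = 0$ as a nondegenerate solution and hence yields a smooth branch $\ub_{\infty,\eps} = O(\eps)$. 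Writing $\ubu_\eps = \boldsymbol{\phi}_{\theta_0} + \ub_{\infty,\eps} + \mathbf{v}$ with localized remainder $\mathbf{v}$ and subtracting the NLS equation solved by $\boldsymbol{\phi}_{\theta_0}$, the existence problem becomes
\begin{align*}
\mathcal{L}(\boldsymbol{\phi}_{\theta_0})\mathbf{v} = \eps\bigl(\boldsymbol{\phi}_{\theta_0} - \mathbf{F}\bigr) + \mathcal{R}(\mathbf{v};\eps),
\end{align*}
with $\mathcal{R}$ superlinear in $(\mathbf{v},\eps)$. On the even subspace $\mathcal{L}(\boldsymbol{\phi}_{\theta_0})$ has one-dimensional kernel $\spann\{J\boldsymbol{\phi}_{\theta_0}\}$ (from the phase-rotation symmetry of the NLS equation) and adjoint kernel $\spann\{\boldsymbol{\phi}_{\theta_0}\}$ (via the identity $L(\boldsymbol{\phi}_{\theta_0})J\boldsymbol{\phi}_{\theta_0} = 0$). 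The Fredholm solvability condition at leading order, $\int_\R(\boldsymbol{\phi}_{\theta_0} - \mathbf{F})\cdot \boldsymbol{\phi}_{\theta_0}\, \de x = 0$, evaluates via the standard integrals $\int \phi_0^2\,\de x = 4\sqrt{\zeta}$ and $\int \phi_0\,\de x = \pi\sqrt{2}$ to the bifurcation relation $\pi f \cos\theta_0 = 2\sqrt{2\zeta}$, which admits a real solution precisely when $\pi^2 f^2 > 8\zeta$; the condition $\sin\theta_0 \neq 0$ is the transversality (nonvanishing $\theta_0$-derivative) that lets the implicit function theorem continue the branch to $\eps > 0$. Contraction mapping in the range complement of the even kernel then produces $\mathbf{v}$ with $\|\mathbf{v}\|_{H^2} = O(\eps)$, from which the estimates~\eqref{estimates_1_pulse} follow by Sobolev embedding.

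For the spectral analysis, $\mathcal{L}(\boldsymbol{\phi}_{\theta_0})$ has essential spectrum $i\R\setminus(-i\zeta,i\zeta)$ and a four-dimensional generalized kernel at $0$ consisting of two Jordan chains: the odd chain $\{\boldsymbol{\phi}_{\theta_0}',\, \tfrac{1}{2}x J\boldsymbol{\phi}_{\theta_0}\}$ from translation invariance and the even chain $\{J\boldsymbol{\phi}_{\theta_0},\, -\partial_\zeta \boldsymbol{\phi}_{\theta_0}\}$ from phase rotation, with the even adjoint generalized kernel spanned by $\{\boldsymbol{\phi}_{\theta_0},\, -J\partial_\zeta \boldsymbol{\phi}_{\theta_0}\}$. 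Switching on the perturbation, the essential spectrum of $\mathcal{L}(\ubu_\eps) - \eps$ is uniformly shifted by $-\eps$ and hence lies in $\{\Re\lambda \leq -\eps + O(\eps^2)\}$, so any instability must come from eigenvalues bifurcating from $0$. Translation invariance of~\eqref{LLE_system} is preserved, so $\ubu_\eps' \in \ker(\mathcal{L}(\ubu_\eps) - \eps)$ automatically. The phase-related Jordan chain, however, is split by the dissipative perturbation $-\eps I + (\mathcal{L}(\ubu_\eps) - \mathcal{L}(\boldsymbol{\phi}_{\theta_0}))$. Standard Jordan-block perturbation theory reduces the problem near $0$ to diagonalizing a $2\times 2$ matrix built from pairings of this perturbation, acting on the even generalized kernel, against the even adjoint generalized kernel. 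Using the explicit $O(\eps)$-profile correction from above, the ensuing sech-integrals collapse to give a bifurcating eigenvalue of the form $c\,\eps\sin\theta_0 + o(\eps)$ for some explicit nonzero constant $c$ depending on $\zeta$ and $f$. Hence $\sin\theta_0 > 0$ drives this eigenvalue into the open left half-plane (spectral stability) while $\sin\theta_0 < 0$ drives it into the open right half-plane (spectral instability), and in both cases the only eigenvalue left at $0$ is the simple translational one, giving $\ker(\mathcal{L}(\ubu_\eps) - \eps) = \spann\{\ubu_\eps'\}$.

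The main obstacle is precisely this spectral analysis near $0$: correctly identifying the Jordan structure of $\mathcal{L}(\boldsymbol{\phi}_{\theta_0})$ on the even subspace, computing the reduced $2\times 2$ perturbation matrix in this non-Hamiltonian dissipative setting (which receives contributions both from the explicit damping $-\eps I$ and from the profile change $\mathcal{L}(\ubu_\eps) - \mathcal{L}(\boldsymbol{\phi}_{\theta_0})$), and extracting the sign of the $\sin\theta_0$-coefficient from the explicit sech-integrals. A complementary issue is excluding spectrum with $\Re\lambda > 0$ away from the origin; since the focusing NLS bright soliton is spectrally stable against localized perturbations, this follows by combining continuity of isolated eigenvalues of finite multiplicity under the relatively bounded perturbation (cf.~\cite{Kato1995Perturbation}) with uniform-in-$\eps$ exponential-dichotomy or Evans-function resolvent bounds, which together ensure that on any fixed compact set the discrete spectrum of $\mathcal{L}(\ubu_\eps) - \eps$ deforms continuously from that of $\mathcal{L}(\boldsymbol{\phi}_{\theta_0})$ as $\eps$ is switched on.
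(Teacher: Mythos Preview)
The paper does not actually give a proof of this theorem: it is stated as a citation of~\cite[Theorems~1 and~2]{Bengel2024Stability} and is used as a black box throughout. So there is no in-paper argument to compare against. What the paper does say, in~\S\ref{sec:approach}, is that the cited reference establishes existence by bifurcating from the rotated bright soliton and obtains spectral stability ``using Krein index counting and analytic perturbation theory.''

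Your existence sketch is correct and matches what one expects: the Lyapunov--Schmidt reduction on the even subspace, the identification of the even kernel direction $J\boldsymbol{\phi}_{\theta_0}$ and adjoint kernel direction $\boldsymbol{\phi}_{\theta_0}$, and the solvability computation reproducing $\pi f\cos\theta_0 = 2\sqrt{2\zeta}$ are all standard and your integrals check out. The spectral sketch is also broadly right in spirit---the essential spectrum shifts to $\Re\lambda=-\eps$, the translation mode is pinned at $0$, and the fate of the phase-rotation Jordan chain decides (in)stability with the sign governed by $\sin\theta_0$. Two points deserve care, though. First, a generic perturbation of a length-two Jordan block produces eigenvalue splitting at order $\sqrt{\eps}$ rather than $\eps$; your asserted form $c\,\eps\sin\theta_0 + o(\eps)$ requires that the relevant off-diagonal entry of the reduced matrix vanish at leading order, which does happen here but should be argued, not assumed. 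Second, the theorem (and the paper's definition of spectral stability) requires \emph{algebraic} simplicity of the eigenvalue $0$, whereas your argument as written only tracks geometric simplicity; you need to verify that the translational Jordan chain also collapses to length one under the perturbation. The Krein-index route alluded to in the paper handles both issues more systematically, but your direct perturbation-matrix approach can be made to work with these refinements.
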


\begin{figure}[t]
    \centering
    \includegraphics[width=0.3\textwidth]{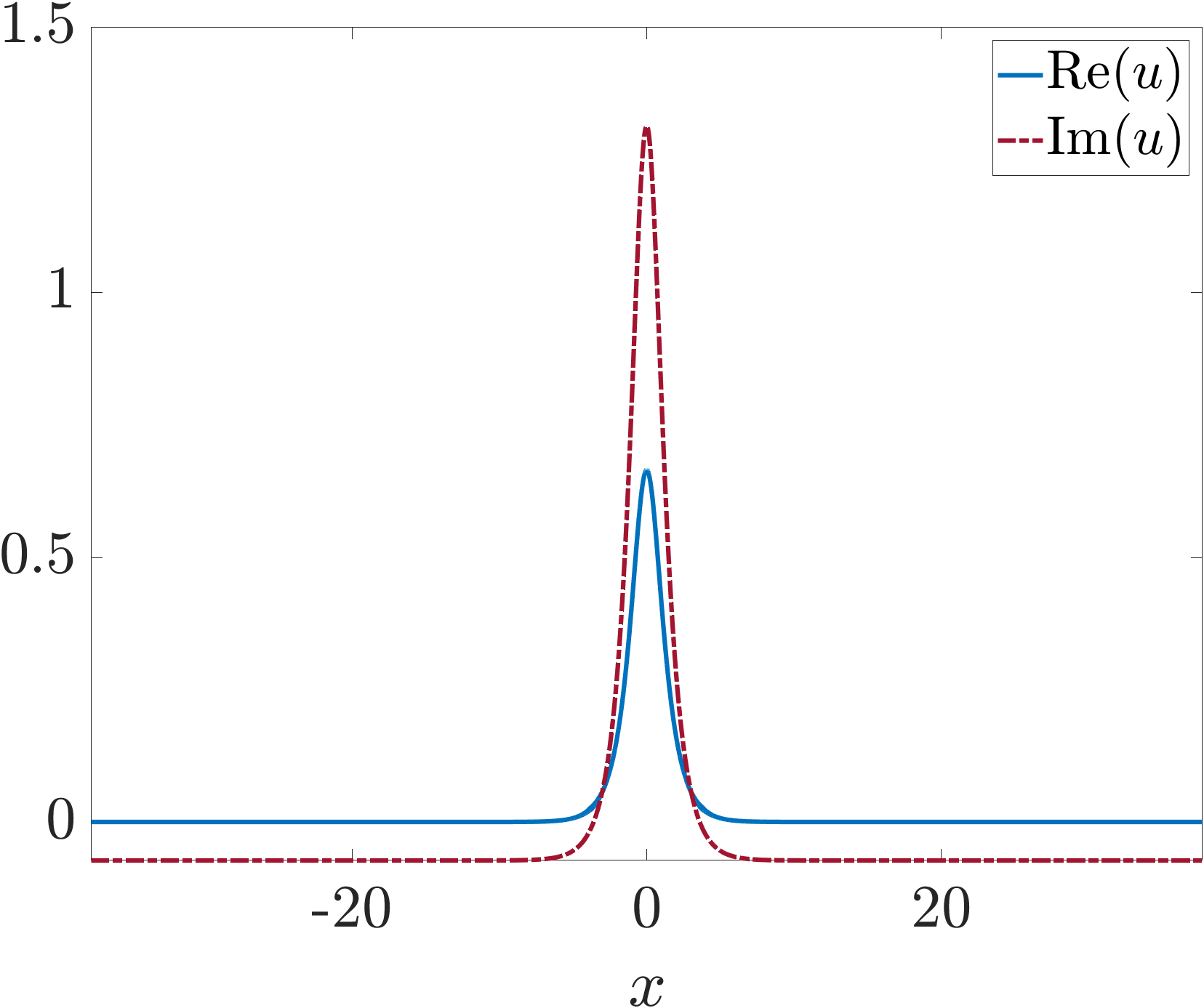}\hspace{1.5em}
    \includegraphics[width=0.3\textwidth]{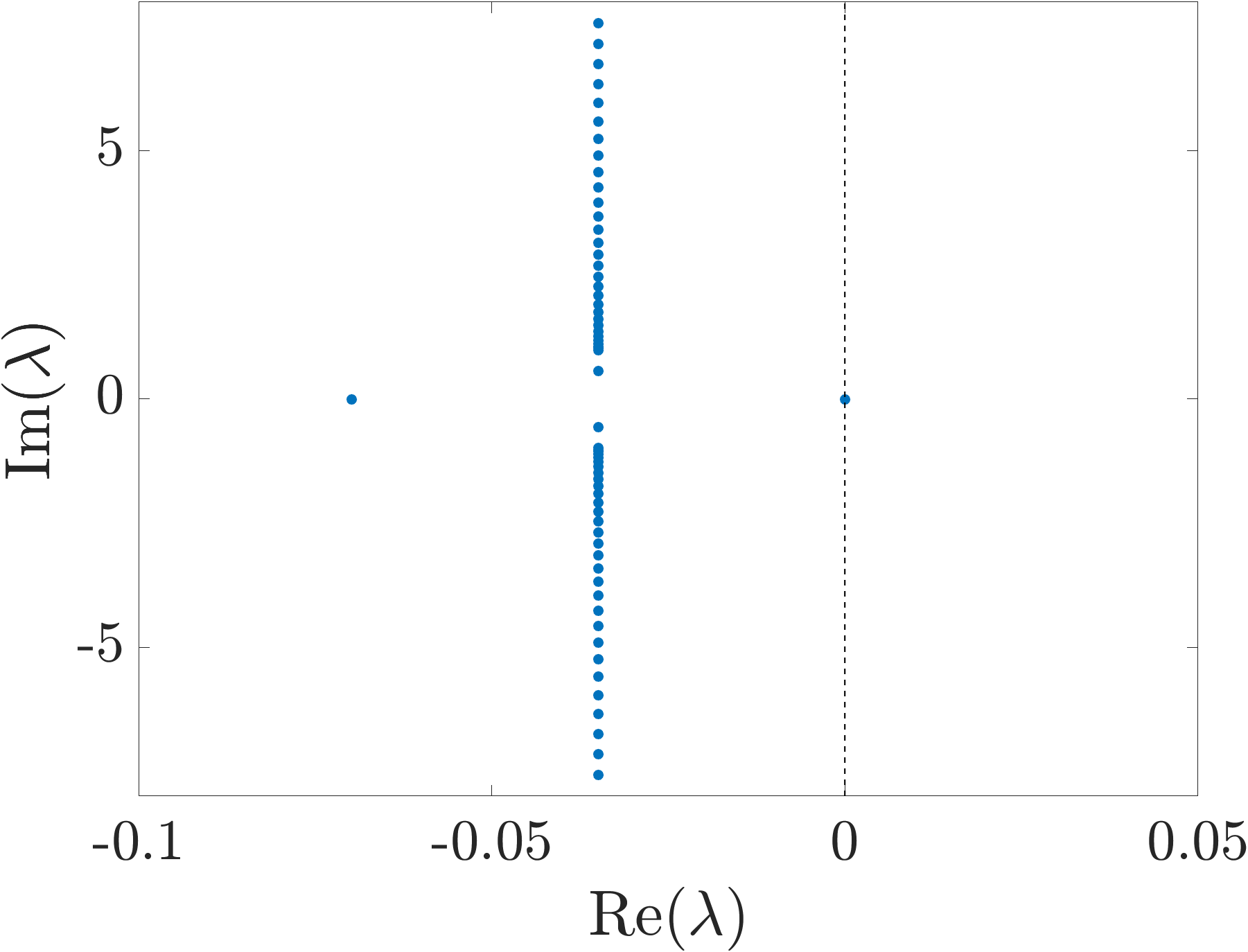}\hspace{1.5em}
    \includegraphics[width=0.3\textwidth]{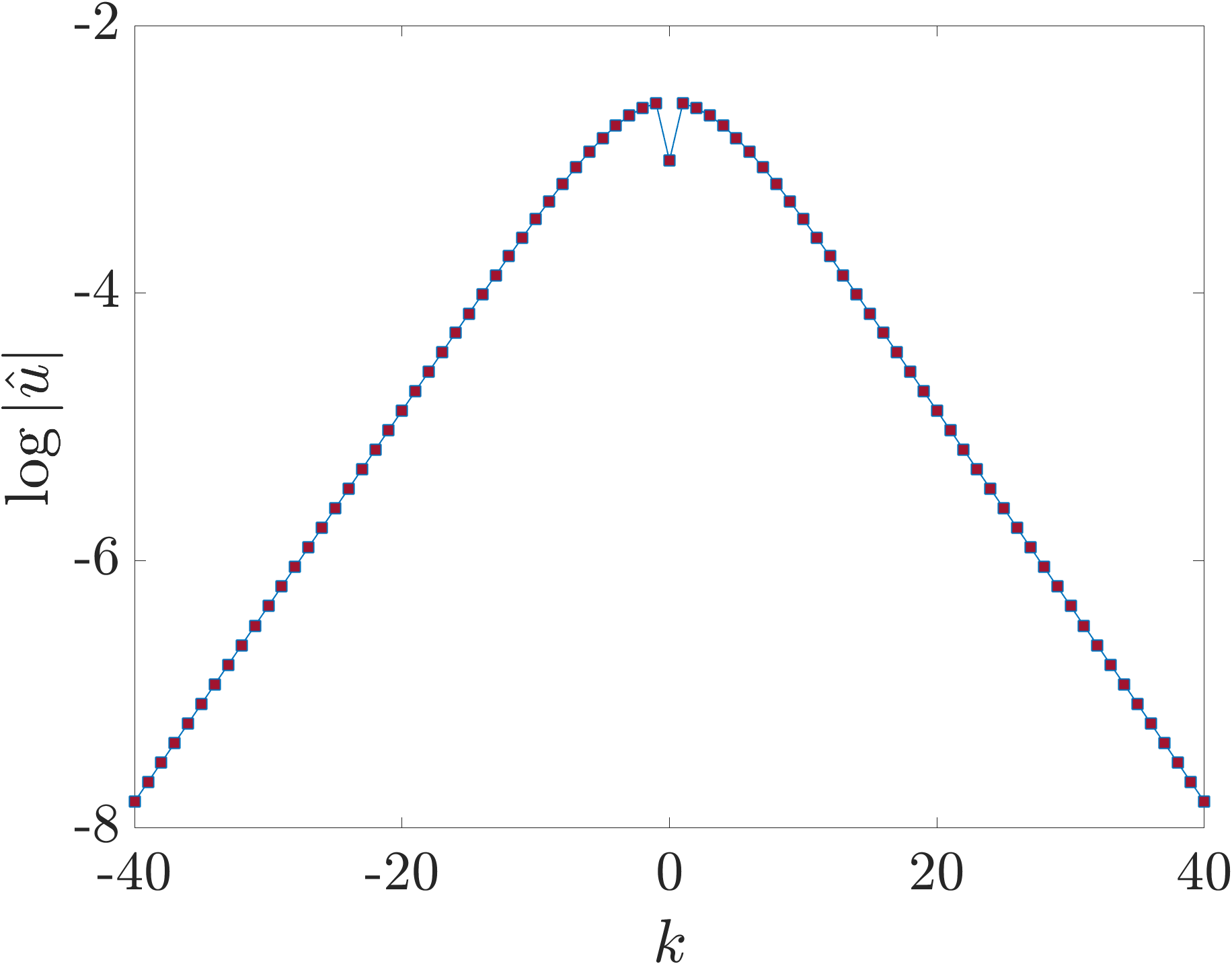}
    \\\vspace*{1em}
    \includegraphics[width=0.3\textwidth]{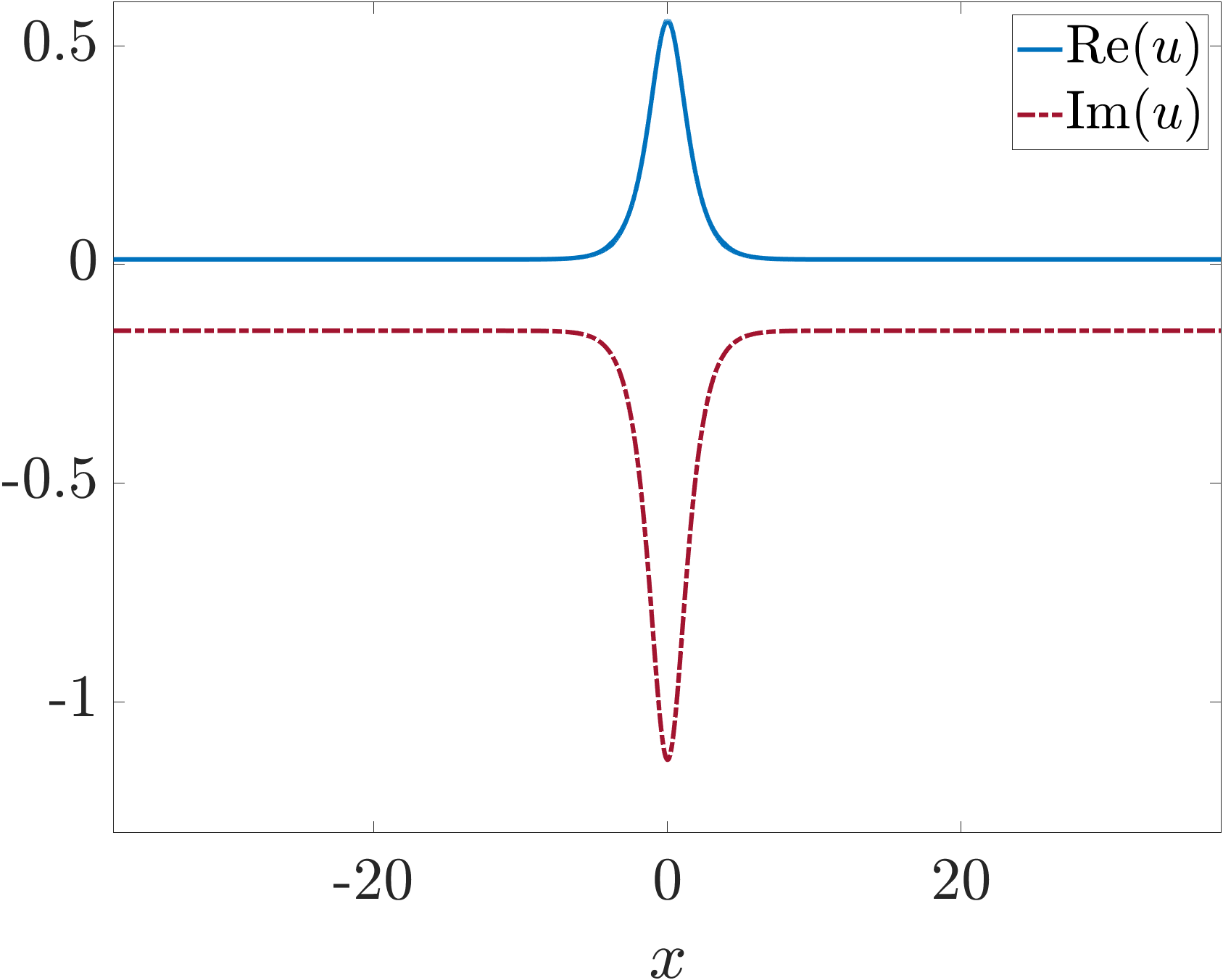}\hspace{1.5em}
    \includegraphics[width=0.3\textwidth]{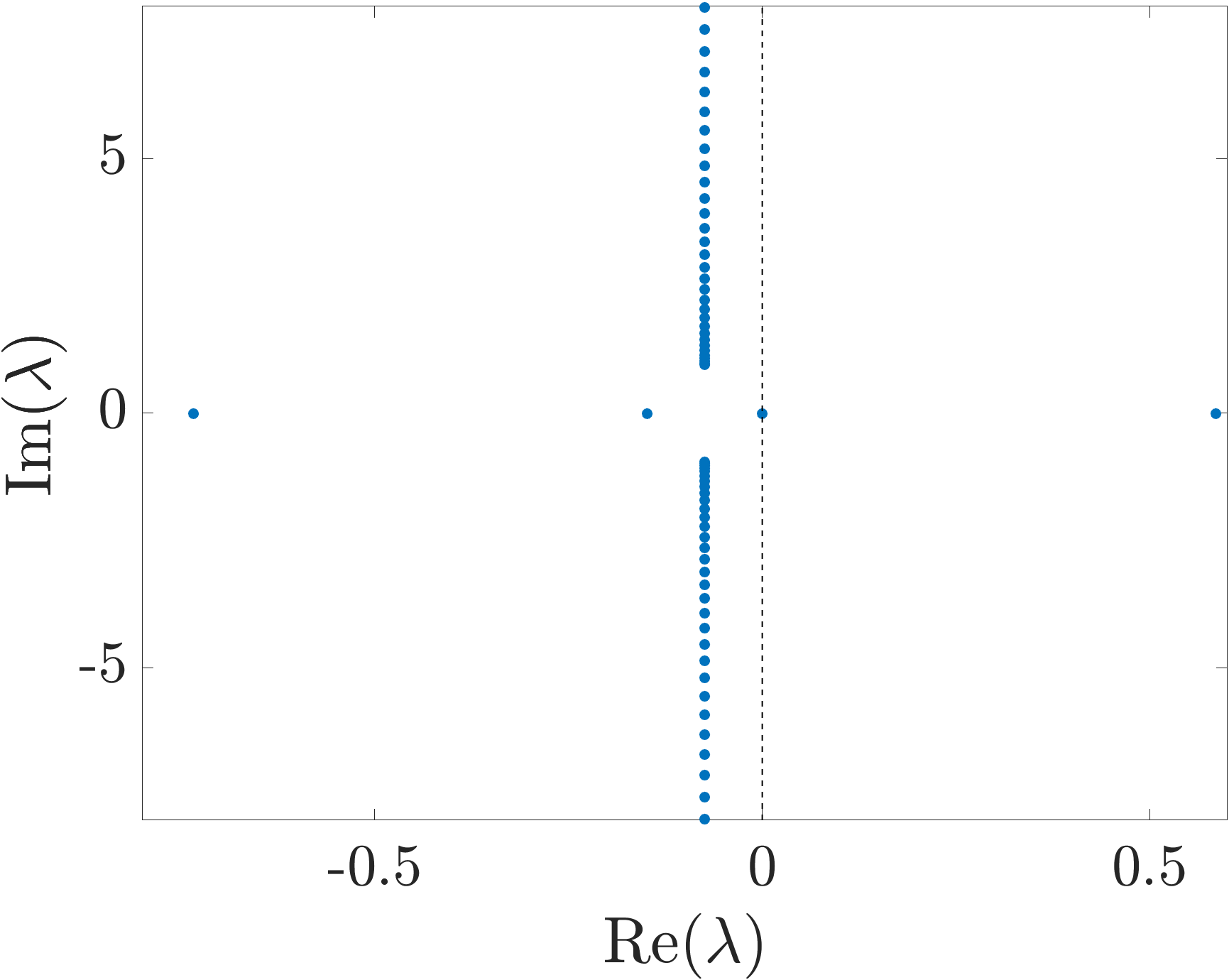}\hspace{1.5em}
     \includegraphics[width=0.3\textwidth]{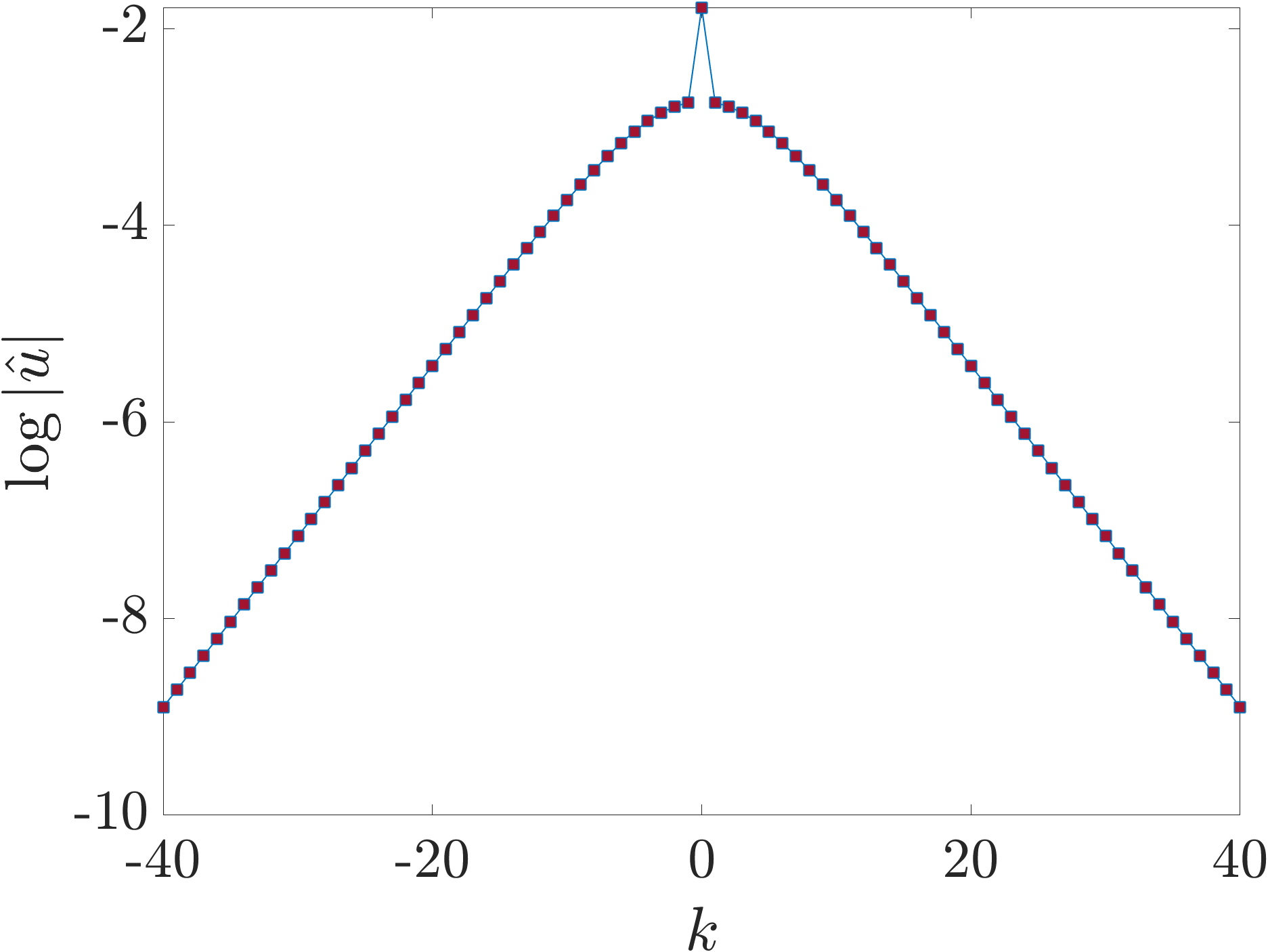}
    \caption{Periodic approximations of the primary 1-pulse solutions established in Theorem~\ref{thm:1-pulse} (see also Theorem~\ref{thm:ex_periodic}) with parameters $\zeta =1, f=2$. The solutions were computed with the MATLAB package \texttt{pde2path} \cite{Uecker2014pde2path}. The top row shows a stable 1-pulse bifurcating with $\sin\theta_0>0$, its spectrum against co-periodic perturbations, and the corresponding frequency comb obtained by plotting $\log|\hat{u}(k)|$ against the Fourier frequency variable $k$. The bottom row depicts an unstable 1-pulse bifurcating with $\sin\theta_0<0$, the associated spectrum, and its frequency comb.
    }
\end{figure}

We prove that the $1$-pulse solutions, established in Theorem~\ref{thm:1-pulse}, correspond to nondegenerate homoclinics connecting to a saddle-focus equilibrium in the dynamical system~\eqref{LLE_DS}. The definition of nondegeneracy, cf.~\cite{Vanderbauwhede1992Homoclinic}, reads as follows. 

\begin{Definition}
A homoclinic solution $\un{U}$ of~\eqref{LLE_DS} is called \emph{nondegenerate} if all bounded solutions $U \colon \R \to \R^4$ to the variational problem
\begin{align} \label{e:variational}
    U' = \partial_U F(\un{U}(x);\eps) U
\end{align}
are given by scalar multiples of $\un{U}'$.
\end{Definition}

We exploit the spectral properties of the linearization of~\eqref{LLE_system} about the primary $1$-pulse to show nondegeneracy of the corresponding homoclinic in~\eqref{LLE_DS}. In particular, the nondegeneracy follows from the fact that $0$ is a geometrically simple eigenvalue of the linearization. 

\begin{Lemma}\label{lem:nondegeneracy}
Let $\ub_\infty \in \R^2$ and let $\ubu \colon \R \to \R^2$ be a smooth solution of~\eqref{LLE_system_existence} such that $\ubu(x) \to \ub_\infty$ and $\ubu'(x) \to 0$ as $x \to \pm \infty$. Assume that $0$ is a geometrically simple eigenvalue of $\El(\ubu)-\eps$ with associated eigenfunction $\ubu'$. Then, the corresponding homoclinic solution $\un{U} \colon \R \to \R^4$ to~\eqref{LLE_DS} given by $\un{U} = \smash{(\ubu,\ubu')^\top}$  is nondegenerate if its asymptotic state $U_\infty = \smash{(\ub_\infty,0)^\top}$ is hyperbolic.
\end{Lemma}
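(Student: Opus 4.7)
The plan is to reduce the four-dimensional variational problem on $\R^4$ to the two-dimensional eigenvalue problem $(\El(\ubu) - \eps)\mathbf{v} = 0$ on $L^2(\R)$ and then exploit the assumed geometric simplicity of the zero eigenvalue together with the hyperbolicity of $U_\infty$.

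\textbf{Step 1: Reformulate the variational equation.} First I would verify that, writing a candidate bounded solution as $U = (\mathbf{v}_1,\mathbf{v}_2,\mathbf{v}_1',\mathbf{v}_2')^\top$, the system $U' = \partial_U F(\un U(x);\eps)\,U$ is, componentwise, exactly
\begin{equation*}
-\mathbf{v}'' + \zeta\mathbf{v} - A(\ubu)\mathbf{v} + \eps J\mathbf{v} = 0,
\end{equation*}
where $A(\ubu)$ is the matrix appearing in $L(\ubu)$. Multiplying by $J$ and rearranging, this is equivalent to $(\El(\ubu) - \eps)\mathbf{v} = 0$ in the classical sense. Hence the map $U \mapsto \mathbf{v} = (U_1,U_2)^\top$ is a bijection between solutions of the variational problem and classical solutions of the scalar eigenvalue problem at $0$.

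\textbf{Step 2: Promote bounded solutions to $H^2$-solutions via hyperbolicity.} The key analytic input is that hyperbolicity of $U_\infty$ forces any bounded $U\colon\R\to\R^4$ solving the variational problem to decay exponentially at $\pm\infty$. Because $\partial_U F(\un U(x);\eps) \to \partial_U F(U_\infty;\eps)$ as $|x|\to\infty$ and the limiting matrix has no spectrum on $\iu\R$, the roughness of exponential dichotomies (standard asymptotic ODE theory, e.g.\ Coppel) yields exponential dichotomies for the variational equation on both half-lines $(-\infty,0]$ and $[0,\infty)$ with the same splitting dimensions as at $U_\infty$. Any solution bounded on $[0,\infty)$ must lie in the stable fiber at $x=0$ and hence decay exponentially as $x\to+\infty$; symmetrically on $(-\infty,0]$. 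In particular $\mathbf{v},\mathbf{v}',\mathbf{v}''\in L^2(\R)$, so $\mathbf{v}\in H^2(\R) = \mathrm{dom}(\El(\ubu) - \eps)$.

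\textbf{Step 3: Apply geometric simplicity.} Since $\mathbf{v}\in H^2(\R)$ satisfies $(\El(\ubu)-\eps)\mathbf{v}=0$ and, by hypothesis, $0$ is a geometrically simple eigenvalue of $\El(\ubu)-\eps$ with eigenfunction $\ubu'$, there is $c\in\R$ with $\mathbf{v} = c\,\ubu'$. Substituting back, $U = (\mathbf{v},\mathbf{v}')^\top = c(\ubu',\ubu'')^\top = c\,\un U'$, which is the desired nondegeneracy.

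\textbf{Main obstacle.} The only non-bookkeeping step is Step~2, i.e.\ upgrading "bounded on $\R$" to "decays exponentially, hence lies in $H^2(\R)$." This is the reason the hyperbolicity of $U_\infty$ is a hypothesis: without it, a bounded solution of the variational problem need not be $L^2$ and the reduction to the spectral assumption on $\El(\ubu) - \eps$ would break down. I would handle this by explicitly invoking roughness of exponential dichotomies under asymptotically autonomous perturbations, which is the minimal tool needed and is entirely standard.
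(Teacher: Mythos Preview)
Your proposal is correct and follows essentially the same approach as the paper: reduce the four-dimensional variational problem to the kernel equation $(\El(\ubu)-\eps)\mathbf{v}=0$, use hyperbolicity of $U_\infty$ together with roughness of exponential dichotomies on the half-lines to upgrade bounded solutions to exponentially decaying (hence $H^2$) solutions, and then invoke geometric simplicity. The paper cites Palmer's lemma for the dichotomy step where you cite Coppel, but the argument is the same.
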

\begin{proof}
First, we note that, since the variational equation~\eqref{e:variational} has smooth coefficients, all of its solutions are smooth. Moreover, if $U = (U_1,U_2,U_3,U_4)^\top$ is an $L^2$-localized solution of~\eqref{e:variational}, then it follows, by expressing derivatives through the equation, that $U'$ and $U''$ are also $L^2$-localized. Hence, we infer that $\ub =(U_1,U_2)^\top \in H^2(\R)$ must lie in $\ker(\El(\ubu) - \eps) = \spann\{\ubu'\}$. Consequently, $U$ is a scalar multiple of $\un{U}'$, implying that $\un{U}$ is nondegenerate. Thus, in order to prove the result, it suffices to show that all bounded solutions of~\eqref{e:variational} are $L^2$-localized. This will be achieved with the aid of exponential dichotomies. We first look at the limiting system
\begin{align} \label{e:variationallimit}
    U' = \partial_U F(U_\infty;\eps) U.
\end{align}
Since the matrix $\partial_U F(U_\infty;\eps)$ is hyperbolic by assumption, system~\eqref{e:variationallimit} admits an exponential dichotomy on $\R$. Combining the latter with the fact that the coefficient matrix $\partial_U F(\un{U}(x);\eps)$ of~\eqref{e:variational} converges to $\partial_U F(U_\infty;\eps)$ as $x \to \pm \infty$, we infer that system~\eqref{e:variational} possesses exponential dichotomies on both half-lines $(-\infty,0]$ and $[0,\infty)$ by~\cite[Lemma~3.4]{Palmer1984Exponential}. Therefore, every bounded solution of~\eqref{e:variational} is exponentially localized and, thus, lies in $L^2(\R)$.
\end{proof}

With the aid of Lemma~\ref{lem:nondegeneracy}, we establish that the primary $1$-pulse solutions in Theorem~\ref{thm:1-pulse} correspond to nondegenerate homoclinics in~\eqref{LLE_DS} connecting to a saddle-focus equilibrium.

\begin{Proposition}\label{prop:saddle_focus}
Let $\ubu_\eps$ be a stationary $1$-pulse solution, established in Theorem~\ref{thm:1-pulse}. Then, provided $\eps > 0$ is sufficiently small,
\begin{align*} \underline{U}_\eps = (\ubu_{1,\eps},\ubu_{2,\eps},\ubu_{1,\eps}',\ubu_{2,\eps}')^\top\end{align*} 
is a nondegenerate homoclinic solution to~\eqref{LLE_DS}, whose asymptotic state $U_{\infty,\eps} = \smash{\displaystyle \lim_{x \to \pm\infty} \un{U}_\eps(x)}$ is a saddle-focus equilibrium, i.e., $\partial_U F(U_{\infty,\eps};\eps)$ has the four eigenvalues $\pm \alpha \pm \beta \iu$ with $\alpha,\beta> 0$. 
\end{Proposition}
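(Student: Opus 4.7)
The plan is to verify three claims in turn: (a) $\un{U}_\eps$ is a homoclinic orbit of \eqref{LLE_DS}; (b) its asymptotic state $U_{\infty,\eps}$ is a saddle-focus equilibrium; and (c) it is nondegenerate. Claim (a) is essentially a bookkeeping step: since $\ubu_\eps$ solves the second-order system~\eqref{LLE_system_existence}, the associated four-vector $\un{U}_\eps = (\ubu_\eps,\ubu_\eps')^\top$ solves the first-order reformulation~\eqref{LLE_DS} by construction. By Theorem~\ref{thm:1-pulse} we have $\ubu_\eps - \ub_{\infty,\eps} \in H^2(\R)$, so both $\ubu_\eps - \ub_{\infty,\eps}$ and $\ubu_\eps'$ lie in $H^1(\R)$ and therefore decay to $0$ at infinity. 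Hence $\un{U}_\eps(x) \to U_{\infty,\eps} := (\ub_{\infty,\eps},0)^\top$ as $x \to \pm\infty$, and $U_{\infty,\eps}$ is an equilibrium of~\eqref{LLE_DS} because the constant state $\ub_{\infty,\eps}$ is a (spatially homogeneous) solution of~\eqref{LLE_system_existence}.

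For (b) I compute the Jacobian $A_\eps := \partial_U F(U_{\infty,\eps};\eps)$. From the explicit form of $F$ it has the block shape
\begin{align*}
A_\eps = \begin{pmatrix} 0 & I_2 \\ B_\eps & 0 \end{pmatrix},
\end{align*}
where, using $|\ub_{\infty,\eps}| = O(\eps)$ from Theorem~\ref{thm:1-pulse},
\begin{align*}
B_\eps = \begin{pmatrix} \zeta & \eps \\ -\eps & \zeta \end{pmatrix} + O(\eps^2).
\end{align*}
A block-determinant calculation (valid because the diagonal blocks commute with $I_2$) gives $\det(A_\eps - \lambda I_4) = \det(\lambda^2 I_2 - B_\eps)$, so the eigenvalues of $A_\eps$ are precisely the square roots of the eigenvalues of $B_\eps$. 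The eigenvalues of $B_\eps$ are $\zeta \pm \iu\eps + O(\eps^2)$, whose square roots are $\pm\sqrt{\zeta} \pm \iu \tfrac{\eps}{2\sqrt{\zeta}} + O(\eps^2)$. For all sufficiently small $\eps > 0$ these are four distinct eigenvalues of the form $\pm\alpha \pm \beta\iu$ with $\alpha,\beta > 0$, so $U_{\infty,\eps}$ is a saddle-focus and, in particular, hyperbolic.

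Claim (c) follows immediately by combining (a)--(b) with Lemma~\ref{lem:nondegeneracy}: Theorem~\ref{thm:1-pulse} guarantees that $\ker(\El(\ubu_\eps)-\eps) = \spann\{\ubu_\eps'\}$, so $0$ is a geometrically simple eigenvalue of the linearization with eigenfunction $\ubu_\eps'$, while (b) supplies the required hyperbolicity of the asymptotic state. I expect the only genuinely delicate point to be the eigenvalue analysis in (b): at $\eps=0$ the matrix $A_0$ has the double real eigenvalues $\pm\sqrt{\zeta}$, and it is only the $\eps$-perturbation (originating from the damping-forcing term $\iu\eps(-u+f)$ in~\eqref{LLE}) that lifts the degeneracy and produces a nonzero imaginary part, turning a real saddle into a saddle-focus. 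The perturbation must therefore be handled at leading order, but once the off-diagonal $\pm\eps$ entries of $B_\eps$ are identified the splitting is immediate and robust under the $O(\eps^2)$ corrections.
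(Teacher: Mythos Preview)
Your proof is correct and follows essentially the same approach as the paper: establish the homoclinic from $\ubu_\eps - \ub_{\infty,\eps} \in H^2(\R)$, compute the linearization at the equilibrium to find the saddle-focus structure, and invoke Lemma~\ref{lem:nondegeneracy} together with the simplicity of the kernel from Theorem~\ref{thm:1-pulse} for nondegeneracy. The only cosmetic difference is that the paper writes down the exact formula $\nu_{\pm,\eps}^2 = \zeta - 2|\ub_{\infty,\eps}|^2 \pm \iu\sqrt{\eps^2 - |\ub_{\infty,\eps}|^4}$ for the squared eigenvalues, whereas you work with the $O(\eps^2)$ truncation of $B_\eps$; both routes yield $\alpha,\beta>0$ for small $\eps$.
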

\begin{proof}
It holds $F(0;0) = 0$ and $\text{det}(\partial_U F(0;0)) = \zeta^2 > 0$. So, with the aid of the implicit function theorem, we find a unique equilibrium $U_{\infty,\eps} \in \R^4$ of~\eqref{LLE_DS} converging to $0 \in \R^4$ as $\eps \to 0$.
On the other hand, since $\ubu_\eps - \ub_{\infty,\eps}$ lies in $H^2(\R)$ by Theorem~\ref{thm:1-pulse} and functions in $H^1(\R)$, being square integrable and uniformly continuous, converge to $0$ as $x \to \pm \infty$, it follows that $\un{U}_\eps$ is a homoclinic solution of~\eqref{LLE_DS} connecting to the equilibrium $(\ub_{\infty,\eps},0)^\top \in \R^4$. Since the equilibrium $(\ub_{\infty,\eps},0)^\top$ converges to $0$ as $\eps \to 0$ by Theorem~\ref{thm:1-pulse}, it must hold $U_{\infty,\eps} = (\ub_{\infty,\eps},0)^\top$. One readily observes that the matrix $\partial_UF(U_{\infty,\eps};\eps) \in \R^{4 \times 4}$ possesses four eigenvalues $\pm \nu_{\pm,\eps} \in \C$ satisfying
\begin{align*}
	\nu_{\pm,\eps}^2 = \zeta - 2 |\ub_{\infty,\eps}|^2 \pm \iu \sqrt{\eps^2 - |\ub_{\infty,\eps}|^4}.
 \end{align*}
Since we have $|\ub_{\infty,\eps}| = O(\eps)$ by Theorem~\ref{thm:1-pulse}, these four eigenvalues are, provided $\eps > 0$ is sufficiently small, of the form $\pm\alpha \pm \iu \beta$ with $\alpha,\beta >0$, implying that $U_{\infty,\eps}$ is a saddle focus. Finally, using that $U_{\infty,\eps}$ is hyperbolic and $0$ is a simple eigenvalue of $\El(\ubu_\eps) - \eps$ by Theorem~\ref{thm:1-pulse}, we infer that the homoclinic $\un{U}_\eps$ is nondegenerate. 
\end{proof}

\subsection{\texorpdfstring{$N$-pulse}{N-pulse} solutions} \label{ssec:ex_N-pulse}

In this subsection we establish stationary multipulse solutions to~\eqref{LLE_system} composed of any number $N \in \N$ of well-separated primary $1$-pulses, which were obtained in Theorem~\ref{thm:1-pulse}. We allow for superpositions of $1$-pulses bifurcating from bright solitons with different phase rotations. The constructed $N$-pulses are even and the individual distances between the first $\lfloor \frac{N}{2} \rfloor + 1$ pulses can be chosen arbitrarily large, independently of each other. The linearization of~\eqref{LLE_system} about the $N$-pulses possesses $N$ eigenvalues, which converge to $0$ as the distances between the individual pulses tend to $\infty$. These $N$ small eigenvalues are associated with the translational eigenvalues of the $N$ individual 1-pulses constituting the multipulse. We employ~\cite[Theorem~3.6]{Sandstede1999Instability} and~\cite[Theorem~1]{Sandstede1998Stability}, see also~\cite[Section~3]{Sandstede1993Verzweigungstheorie}, to establish for any number $\ell \in \{0,\ldots,N-1\}$ the existence of an $N$-pulse solution such that there are precisely $\ell$ stable eigenvalues and $N-1-\ell$ unstable eigenvalues amongst the $N$ small eigenvalues. The $N$-th eigenvalue resides at the origin and is algebraically simple, implying that the associated $N$-homoclinic in the dynamical system~\eqref{LLE_DS} is nondegenerate, cf.~Lemma~\ref{lem:nondegeneracy}. In summary, given a sequence of $\lceil \frac{N}{2} \rceil$ primary $1$-pulses, an application of~\cite[Theorem~1]{Sandstede1993Verzweigungstheorie} and~\cite[Theorem~3.6]{Sandstede1999Instability} yield an $(n+1)$-parameter family of associated even $N$-pulses, where the first $n = \lfloor \frac{N}{2} \rfloor$ parameters regulate the distances between the first $n+1$ pulses and the last parameter corresponds to the number of stable small eigenvalues.

\begin{Theorem}\label{thm:ex_N-pulse}
Let $N \in \N$ and $\ell \in \{0,\ldots,N-1\}$. Set $n = \lfloor \frac{N}{2} \rfloor$ and $\alpha_0 = N\! \mod 2 \in \{0,1\}$. Let $\ubu_1,\ldots,\ubu_{n+\alpha_0}$ be a sequence of stationary $1$-pulse solutions, established in Theorem~\ref{thm:1-pulse} and converging to the asymptotic end state $\ub_\infty \in \R^2$ as $x \to \pm \infty$. Then, there exist constants $\delta_0, C_0 > 0$ such that for each $\mathbf{k} = (k_1,\ldots,k_n) \in \N^n$ there exist distances $\smash{T_{1,\ell}^{k_1},\ldots,T_{n,\ell}^{k_n} > 0}$ such that~\eqref{LLE_system_existence} admits an even smooth solution $\ubu_{\mathbf{k},\ell} \colon \R \to \R^2$ satisfying 
\begin{align} \label{boundmultipulse}
\begin{split}
&\left|\ubu_{\mathbf{k},\ell}(x) - \ub_\infty - \alpha_0 \left(\ubu_{n+\alpha_0}(x) - \ub_\infty\right) - \sum_{i = 1}^n \left(\ubu_i\left(x - T_{1,\ell}^{k_1} - \ldots - T_{i,\ell}^{k_i}\right)\right. \right.\\ 
&\qquad \qquad \qquad \qquad \qquad \quad \qquad \left.  \phantom{ \sum_{i = 1}^n }\left. +\, \ubu_i\left(x + T_{1,\ell}^{k_1} + \ldots + T_{i,\ell}^{k_i}\right) - 2\ub_\infty\right) \right| \leq \frac{C_0}{\min\{k_1,\ldots,k_n\}}, 
\end{split}
\end{align}
for $x \in \R$. The spectrum of $\El(\ubu_{\mathbf{k},\ell}) - \eps$ in the ball $B_{\delta_0}(0)$ consists of $N$ eigenvalues of which $\ell$ have negative real part and $N-1-\ell$ have positive real part (all counted with algebraic multiplicities). Moreover, $0$ is an algebraically simple eigenvalue. In addition, $\un{U}_{\mathbf{k},\ell} = (\ubu_{\mathbf{k},\ell},\ubu_{\mathbf{k},\ell}')^\top$ is a  nondegenerate homoclinic solution to~\eqref{LLE_DS}, whose asymptotic end state $U_\infty = \smash{\displaystyle \lim_{x \to \pm \infty} \un{U}_{\mathbf{k},\ell}(x)}$ is a saddle-focus equilibrium. Finally, the sequence $\smash{\{T_{i,\ell}^{k}\}_k}$ of pulse distances is monotonically increasing with $\smash{T_{i,\ell}^k} \to \infty$ as $k \to \infty$ for $i = 1,\ldots,n$. 
\end{Theorem}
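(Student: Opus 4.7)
By Proposition~\ref{prop:saddle_focus}, each primary $1$-pulse $\ubu_i$ ($i = 1, \ldots, n+\alpha_0$) corresponds to a symmetric nondegenerate homoclinic $\un{U}_i = (\ubu_i, \ubu_i')^\top$ of the reversible system~\eqref{LLE_DS}, and all of these homoclinics connect to the common saddle-focus equilibrium $U_\infty = (\ub_\infty, 0)^\top$. The plan is to construct $\un{U}_{\mathbf{k},\ell}$ as a symmetric $N$-homoclinic obtained by perturbing a formal symmetric concatenation of the building blocks, namely
\begin{align*}
\alpha_0\, \un{U}_{n+\alpha_0}(x) + \sum_{i=1}^n \bigl(\un{U}_i(x - S_i) + R\,\un{U}_i(-x - S_i)\bigr) + \bigl(1 - \alpha_0 - 2n\bigr) U_\infty,
\end{align*}
where $S_i = T_1 + \cdots + T_i$ is the cumulative distance from the origin to the $i$-th right-hand pulse and $R$ enforces symmetry across $0$.

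To turn this formal object into an actual solution I would invoke the reversible homoclinic bifurcation framework of~\cite{Sandstede1993Verzweigungstheorie,Sandstede1999Instability}, which is built on Lin's method. This reduces the existence problem to a finite-dimensional system of bifurcation equations for the gap distances $T_1, \ldots, T_n$. The saddle-focus nature of $U_\infty$ causes each such equation to carry a leading-order oscillatory term of the form $\eu^{-\alpha T_i}\sin(\beta T_i + \psi_{i,\ell}) + O(\eu^{-2\alpha T_i})$, whose zeros cluster into a countable monotone family $T_{i,\ell}^{k_i}$ indexed by $k_i \in \N$, with $T_{i,\ell}^{k_i} \to \infty$ as $k_i \to \infty$. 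The remainder estimates intrinsic to Lin's method then translate into the approximation bound~\eqref{boundmultipulse}, since the error is exponentially small in $\min_i T_{i,\ell}^{k_i}$ and in particular bounded by $C_0/\min\{k_1,\ldots,k_n\}$.

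For the spectrum, I would apply the Lyapunov--Schmidt-type reduction of~\cite[Theorem~1]{Sandstede1998Stability} to the linearisation $\El(\ubu_{\mathbf{k},\ell}) - \eps$. This produces an $N \times N$ matrix pencil whose eigenvalues approximate, to leading order, the $N$ small eigenvalues clustering near $0$; the pencil is of (block) tridiagonal type, with off-diagonal entries proportional to $\eu^{-\alpha T_i}\cos(\beta T_i + \varphi_{i,\ell})$ coming from oscillatory tail overlaps. Translational invariance of~\eqref{LLE_system}, together with the reversibility and nondegeneracy of the primary pulses, forces one eigenvalue to lie exactly at $0$ and to be algebraically simple, with kernel spanned by $\ubu_{\mathbf{k},\ell}'$. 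Evaluating the off-diagonal entries at the roots $T_{i,\ell}^{k_i}$ of the existence equations yields signs essentially of the form $(-1)^{k_i}$ modulo phase; following~\cite[Theorem~3.6]{Sandstede1999Instability}, a suitable choice of branch (encoded in the discrete parameter $\ell$, which corresponds to selecting the phase shifts $\psi_{i,\ell}, \varphi_{i,\ell}$) realises any prescribed number $\ell \in \{0,\ldots,N-1\}$ of stable small eigenvalues.

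Once this is in place, nondegeneracy of the constructed $N$-homoclinic $\un{U}_{\mathbf{k},\ell}$ follows directly from Lemma~\ref{lem:nondegeneracy}, since the asymptotic state $U_\infty$ is hyperbolic (being a saddle focus) and $0$ is a geometrically simple eigenvalue of $\El(\ubu_{\mathbf{k},\ell}) - \eps$. The main obstacle is the careful bookkeeping needed to match the sign structure of the reduced small-eigenvalue matrix with the branch choices made in the existence problem, so that the $n+1$ discrete degrees of freedom $(k_1,\ldots,k_n,\ell)$ genuinely cover all admissible stability configurations while preserving reversibility; this is precisely where the reversible refinement~\cite{Sandstede1999Instability} of the stability analysis of~\cite{Sandstede1998Stability} plays its decisive role.
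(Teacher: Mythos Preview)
Your proposal is correct and follows essentially the same route as the paper: invoke \cite[Theorem~1]{Sandstede1998Stability} and \cite[Theorem~3.6]{Sandstede1999Instability} after checking their hypotheses via Proposition~\ref{prop:saddle_focus}, and then deduce nondegeneracy of the resulting $N$-homoclinic from Lemma~\ref{lem:nondegeneracy}. The one hypothesis you do not make explicit, and which the paper does verify, is that the primary homoclinics are not in orbit-flip or inclination-flip configuration; this follows by a dimension count, since the saddle-focus spectrum $\pm\alpha\pm\iu\beta$ leaves no room for strong (un)stable submanifolds of the equilibrium $U_\infty$.
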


\begin{proof}
The proof follows from the homoclinic bifurcation results,~\cite[Theorem~1]{Sandstede1998Stability} and~\cite[Theorem~3.6]{Sandstede1999Instability}, after verifying that all necessary hypotheses are satisfied. Specifically, Theorem~\ref{thm:1-pulse} and Proposition~\ref{prop:saddle_focus} yield that $0$ is an algebraically simple eigenvalue of the linearization $\El(\ubu_j) - \eps$ and $U_j = \smash{(\ubu_j,\ubu_j')^\top}$ is a symmetric homoclinic solution to~\eqref{LLE_DS} connecting to the saddle-focus equilibrium $U_\infty = \smash{(\ub_\infty,0)^\top}$ for $j = 1,\ldots,n+\alpha_0$. Furthermore, the asymptotic behavior of $U_j$, $U_j'$, and all bounded solutions to the adjoint problem $\Psi' = - \partial_U F(U_j;\eps)^* \Psi$ is fully described by the eigenvalues of the matrix $\partial_U F(U_\infty;\eps) \in \R^{4 \times 4}$, which are of the form $\pm \alpha \pm \iu \beta$ with $\alpha,\beta > 0$ as $U_\infty$ is a saddle focus. We deduce that the (un)stable manifold of the equilibrium $U_\infty$ in system~\eqref{LLE_DS} do not admit any strong (un)stable submanifolds by dimension counting. Hence, the stable and unstable manifolds along the homoclinics $U_j$ are not in an orbit-flip configuration, cf.~\cite[Section 2.1]{Homburg2010Homoclinic}. By an analogous dimension counting argument we deduce that the (un)stable manifolds along the homoclinics $U_j$ are not in an inclination-flip configuration. We thus conclude that~\cite[Theorem~1]{Sandstede1998Stability} and~\cite[Theorem~3.6]{Sandstede1999Instability} apply, which establish the result, where we point out that the nondegeneracy of $\un{U}_{\mathbf{k},\ell}$ follows from Lemma~\ref{lem:nondegeneracy}.
\end{proof}

We emphasize that taking $\ell \neq N-1$ in Theorem~\ref{thm:ex_N-pulse} results in spectrally unstable $N$-pulses, even if all primary 1-pulses $\ubu_1,\ldots,\ubu_{n+\alpha_0}$ are spectrally stable. 

\begin{Remark}\label{rem:long_term_instability}
In~\cite{Sandstede1998Stability} Lin's method is employed to determine leading-order expressions of the $N$ small eigenvalues bifurcating from $0$. The leading-order expressions in~\cite[Theorem~2]{Sandstede1998Stability} imply that there exist constants $C,\eta > 0$ such that that any of the $N$ eigenvalues $\lambda \in B_{\delta_0}(0)$ in Theorem~\ref{thm:ex_N-pulse} obeys the estimate $|\lambda| \leq C\exp(-\eta \min\{T_{1,\ell}^{k_1},\ldots,T_{n,\ell}^{k_n}\})$. Particularly, any potential instabilities arising from the eigenvalues near zero can be interpreted as \emph{long-time instabilities}, i.e.~instabilities that are only observed on time scales which are exponentially long in terms of the pulse distances.
\end{Remark}

\begin{figure}[t]
    \centering
    \includegraphics[width=0.3\textwidth]{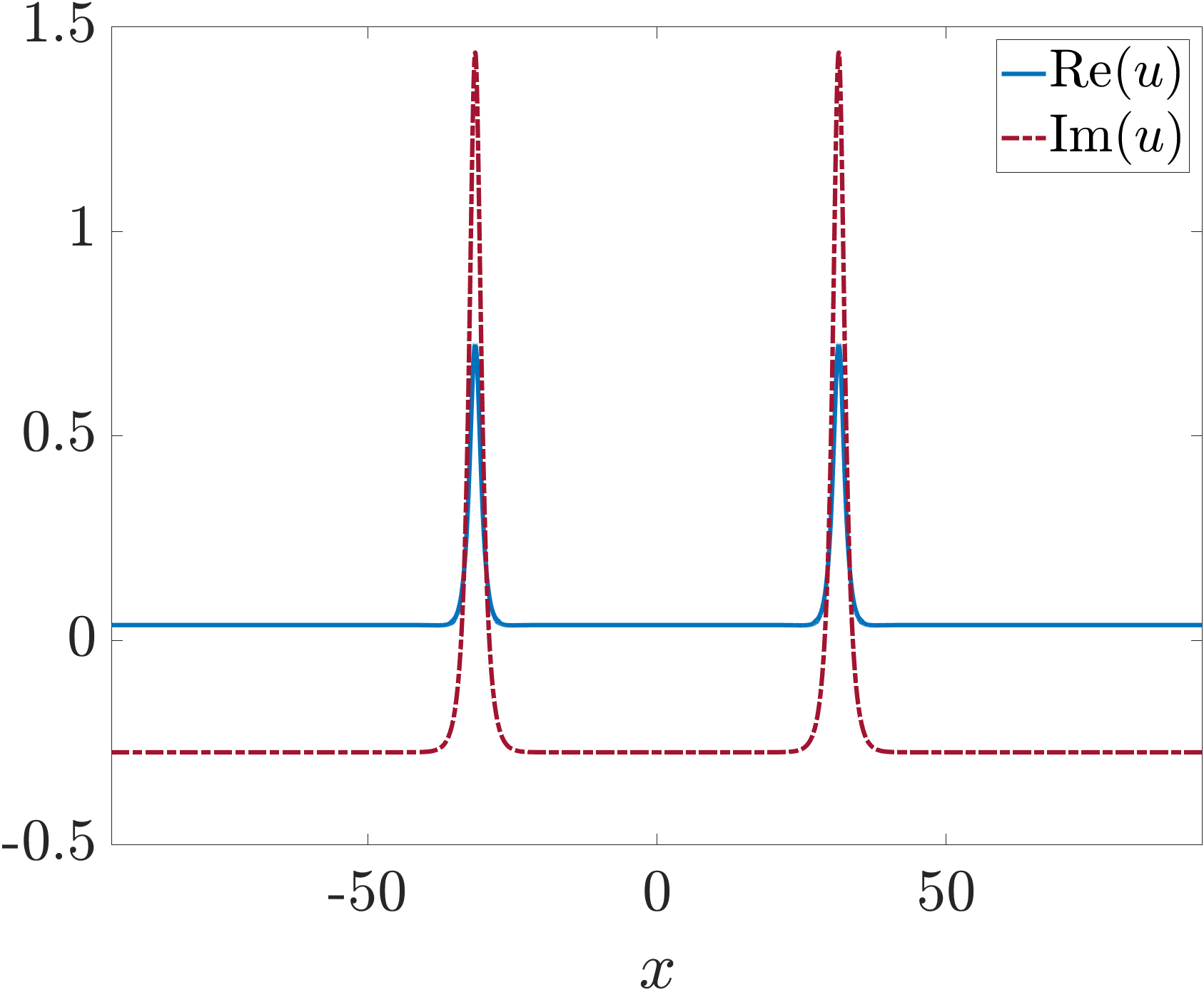}\hspace{1.5em}
    \includegraphics[width=0.3\textwidth]{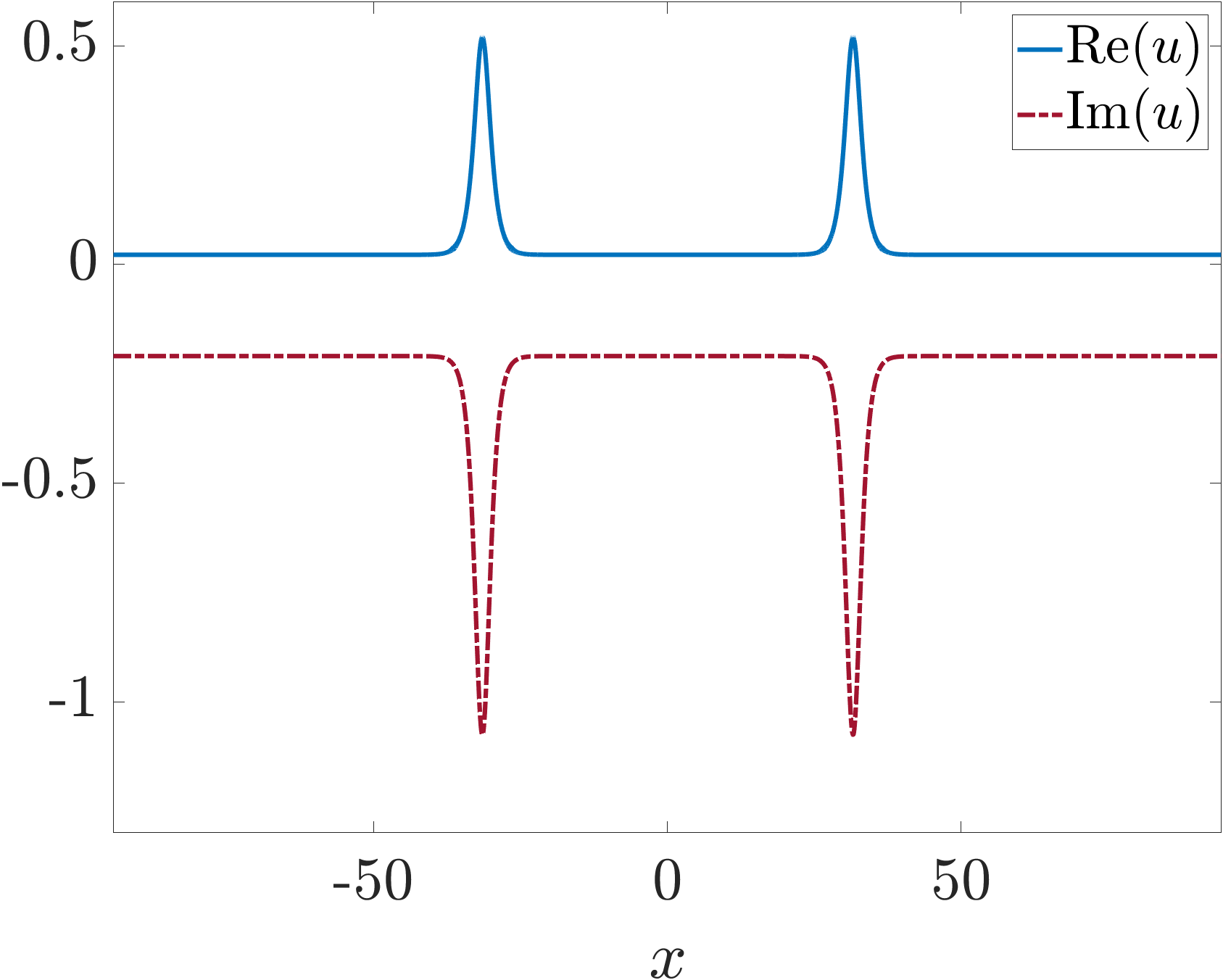}\hspace{1.5em}
    \includegraphics[width=0.3\textwidth]{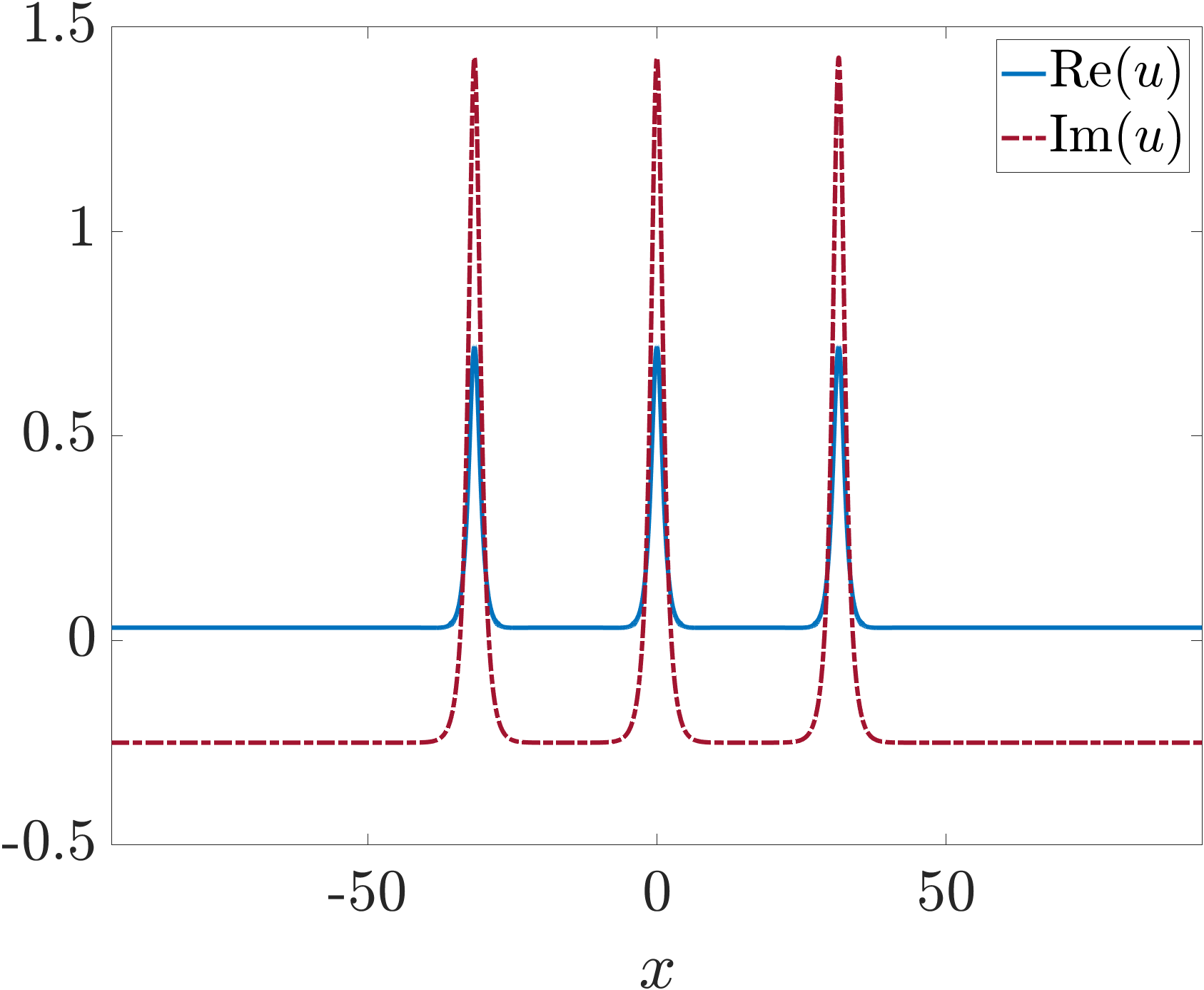}\\\vspace*{1em}
    \includegraphics[width=0.3\textwidth]{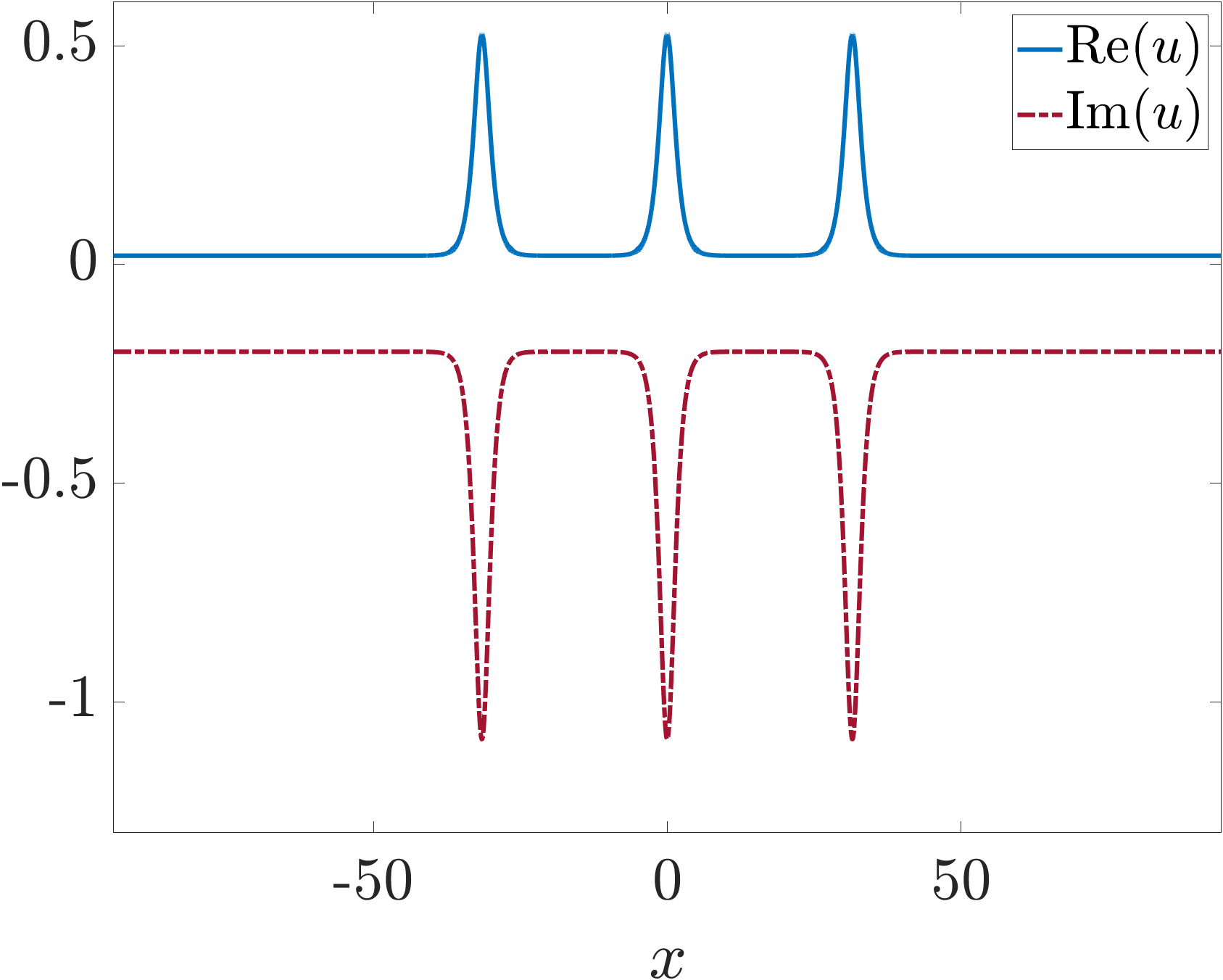}\hspace{1.5em}
    \includegraphics[width=0.3\textwidth]{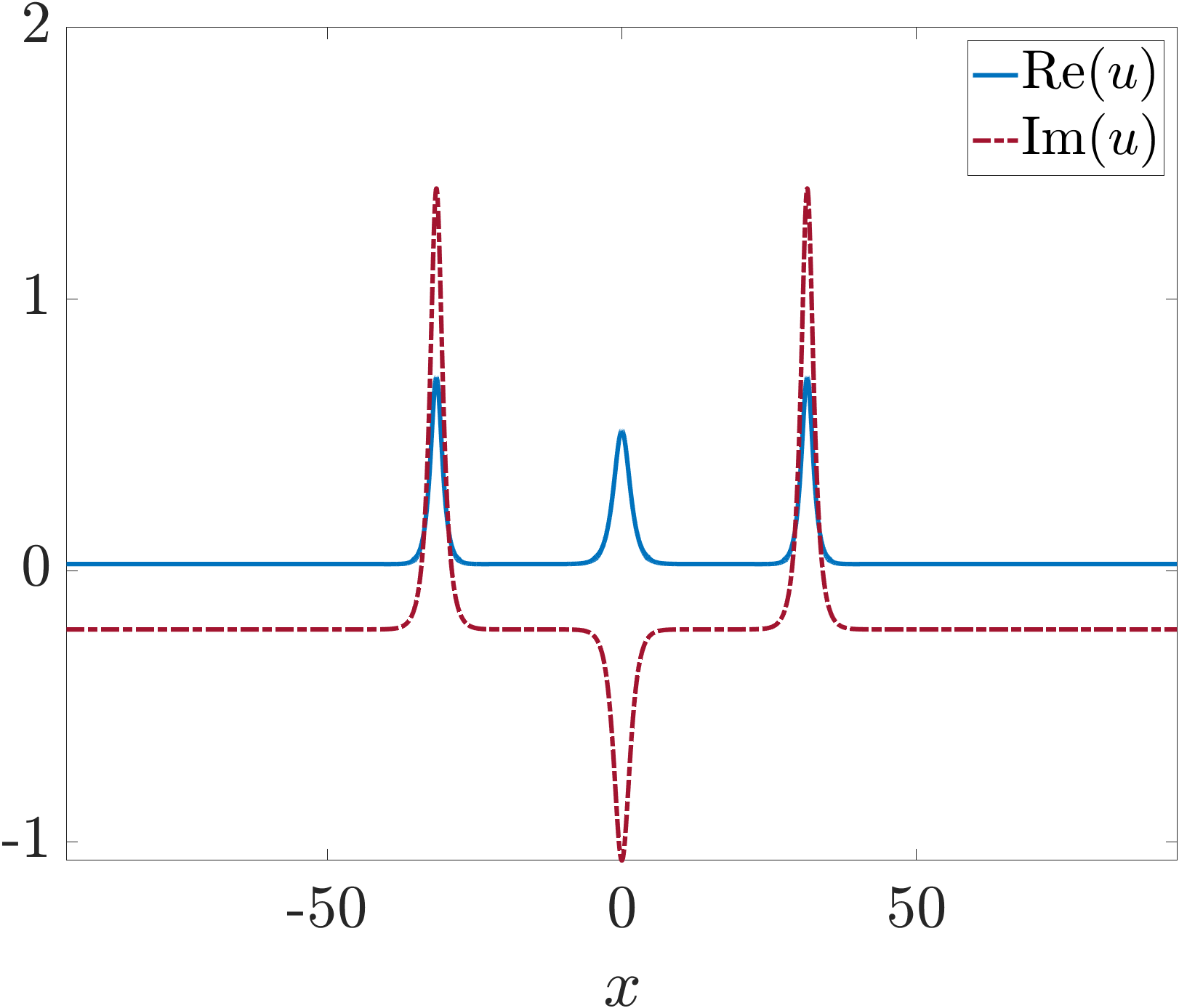}\hspace{1.5em}
    \includegraphics[width=0.3\textwidth]{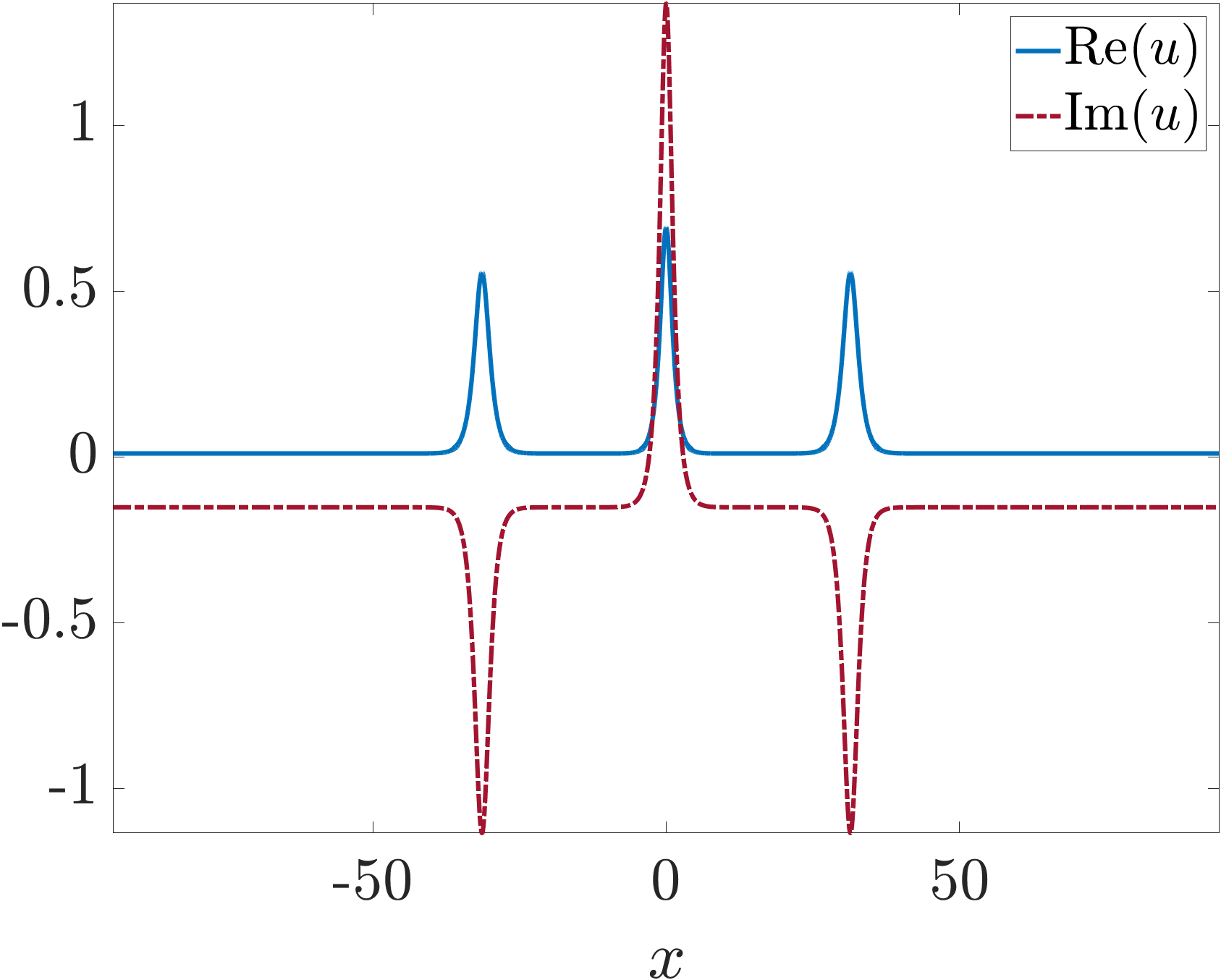}
    \caption{The figure shows periodic approximations of 2- and 3-pulses, which were established in Theorem~\ref{thm:ex_N-pulse}, see also Theorem~\ref{thm:ex_periodic}, computed with \texttt{pde2path} \cite{Uecker2014pde2path}. The parameters are $\zeta =1$ and $f=2$. Continuation was performed in the small parameter $\eps$ starting from superpositions of bright NLS-solitons.}
\end{figure}

\subsection{Periodic \texorpdfstring{$N$-pulse}{N-pulse} solutions} \label{ssec:ex_periodic_pulse}

We obtain periodic multipulse solutions to~\eqref{LLE_system_existence} by employing the dynamical systems formulation~\eqref{LLE_DS}. Specifically, we use that any nondegenerate symmetric homoclinic in a reversible dynamical system connecting to a saddle-focus equilibrium is accompanied by a $1$-parameter family of symmetric periodic orbits parameterized by the period $T$. The $T$-periodic orbits converge uniformly to the homoclinic on a single periodicity interval as $T \to \infty$. This result, which was obtained in~\cite[Theorem~5]{Vanderbauwhede1992Homoclinic}, follows from an application of Lin's method.

\begin{Theorem}\label{thm:ex_periodic}
Let $\un{U} \colon \R \to \R^4$ be a nondegenerate symmetric solution of~\eqref{LLE_DS} homoclinic to a saddle-focus equilibrium $U_\infty \in \R^4$, as established in Theorem~\ref{thm:1-pulse} or Theorem~\ref{thm:ex_N-pulse}. Then, there exists $T_0 > 0$ such that for every $T \geq T_0$ there exists a smooth $T$-periodic symmetric solution $\un{U}_T \colon \R \to \R^4$ to~\eqref{LLE_DS} satisfying 
\begin{align}\label{eq:limit_periodic_to_homoclinic}
    \lim_{T \to \infty} \sup_{x \in \big[-\tfrac{T}{2},\tfrac{T}{2}\big]} |\un{U}_T(x) - \underline{U}(x)| = 0.
\end{align}
\end{Theorem}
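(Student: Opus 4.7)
The plan is to invoke Theorem~5 of~\cite{Vanderbauwhede1992Homoclinic}, which asserts that every nondegenerate symmetric homoclinic orbit to a hyperbolic saddle-focus equilibrium in a smooth reversible dynamical system is accumulated by a one-parameter family of symmetric periodic orbits, parametrized by the period $T$ for $T$ sufficiently large, whose restrictions to $[-T/2,T/2]$ converge uniformly to the homoclinic as $T\to\infty$. My proof therefore reduces to verifying the hypotheses of this theorem for the homoclinic $\un{U}$ in the dynamical system~\eqref{LLE_DS}.

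Smoothness of~\eqref{LLE_DS} is immediate since $F$ is polynomial, and reversibility with respect to the linear involution $R$ was recorded in~\S\ref{sec:soliton-based}. The asymptotic end state $U_\infty$ is a hyperbolic saddle-focus with eigenvalues $\pm\alpha\pm\iu\beta$, $\alpha,\beta>0$, by Proposition~\ref{prop:saddle_focus} or Theorem~\ref{thm:ex_N-pulse}. The homoclinic $\un{U}=(\ubu,\ubu')^\top$ is symmetric: the underlying stationary profile $\ubu$ is even by Theorem~\ref{thm:1-pulse} or Theorem~\ref{thm:ex_N-pulse}, hence $\ubu'$ is odd, yielding $RU(-x)=U(x)$ for all $x\in\R$. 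Finally, the nondegeneracy of $\un{U}$ is proved in Proposition~\ref{prop:saddle_focus} and Theorem~\ref{thm:ex_N-pulse} via Lemma~\ref{lem:nondegeneracy}. With all hypotheses in place, Vanderbauwhede's theorem directly produces the family $\{\un{U}_T\}_{T\geq T_0}$ and the convergence~\eqref{eq:limit_periodic_to_homoclinic}.

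For orientation, I would briefly recall the strategy behind the cited result. A symmetric $T$-periodic solution corresponds to a solution on $[0,T/2]$ whose endpoints both lie in $\mathrm{Fix}(R)=\ker(I-R)$, extended to $\R$ via $U(-x)=RU(x)$ and $T$-periodicity. Candidate orbits are constructed by writing $U=\un{U}+V$ and invoking exponential dichotomies of the variational equation along $\un{U}$; the two matching conditions at $x=0$ and $x=T/2$ against the two-dimensional subspace $\mathrm{Fix}(R)$ reduce, after Lyapunov–Schmidt reduction, to a finite-dimensional equation in $V(0)$, $V(T/2)$ and $T$. Nondegeneracy of $\un{U}$ renders the relevant linearization invertible, so the implicit function theorem yields a unique such symmetric orbit for every $T\geq T_0$, and the dichotomy estimates give the uniform closeness~\eqref{eq:limit_periodic_to_homoclinic} on the half-period window. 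I anticipate that the main obstacle, were one to reproduce rather than cite the proof, is the analysis of the flow near the saddle-focus: because the eigenvalues are genuinely complex the flow spirals, and one must verify that reversibility produces the required intersection with $\mathrm{Fix}(R)$ for a \emph{continuum} of periods $T\geq T_0$, rather than merely for a discrete sequence accumulating at infinity as in the classical non-reversible Shil'nikov scenario. This continuous parametrization is precisely the content of~\cite[Theorem~5]{Vanderbauwhede1992Homoclinic} and closes the argument.
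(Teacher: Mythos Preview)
Your proof is correct and follows essentially the same approach as the paper, which likewise reduces the statement to a direct application of~\cite[Theorem~5]{Vanderbauwhede1992Homoclinic} after noting the relevant hypotheses. The only addition in the paper's version is the explicit remark that Vanderbauwhede's hypothesis is phrased in terms of the homoclinic being \emph{elementary}, and that nondegenerate homoclinics are automatically elementary by~\cite[p.~302]{Vanderbauwhede1992Homoclinic}; you implicitly assume this equivalence when stating Vanderbauwhede's hypotheses directly in terms of nondegeneracy.
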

\begin{proof}
The proof follows immediately from~\cite[Theorem 5]{Vanderbauwhede1992Homoclinic} upon noting that nondegenerate homoclinic solutions are elementary by~\cite[page~302]{Vanderbauwhede1992Homoclinic}.
\end{proof}

Combining Theorem~\ref{thm:ex_periodic} with Theorem~\ref{thm:ex_N-pulse} and Proposition~\ref{prop:saddle_focus} we readily establish the existence of periodic $N$-pulse solutions to~\eqref{LLE_system_existence} for any $N \in \mathbb{N}$.

\section{Stability analysis}\label{sec:stability}

In this section, we establish the spectral and nonlinear stability of the multiple and periodic pulse solutions to~\eqref{LLE_system}, which were obtained in Theorems~\ref{thm:ex_N-pulse} and~\ref{thm:ex_periodic}. As outlined in~\S\ref{sec:approach}, the spectral stability analysis of these pulse solutions hinges on a-priori bounds on the spectrum, a detailed assessment of the spectrum in a neighborhood of the origin relying on~\cite{Sandstede1999Instability,Sandstede2001Stability}, and Evans-function arguments from~\cite{BengeldeRijk} to control the spectrum in a compact set away from the origin. After having obtained spectral stability, nonlinear stability of the multiple and periodic pulse solutions follows by invoking results from~\cite{Bengel2024Stability} and~\cite{Haragus2023Nonlinear,Stanislavova2018Asymptotic}, respectively. 

\subsection{Spectral a-priori bounds}

Given a constant $\rho > 0$ and a smooth stationary solution $\ubu$ of~\eqref{LLE_system} obeying $\|\ubu\|_{L^\infty} \leq \rho$, we establish a-priori bounds on the spectrum of the linearization $\El(\ubu) - \eps$ of~\eqref{LLE_system} about $\ubu$. The bounds ensure that the spectrum of $\El(\ubu) - \eps$ in the closed right-half plane is confined to a compact set, whose boundary depends on $\rho$ and the detuning parameter $\zeta$ only. 

\begin{Lemma}\label{lem:a-priori_spectrum}
Fix $\rho > 0$ and $\zeta \in \R$. There exist constants $\eta_1,\eta_2 >0$ such that for all $\ubu \in L^\infty(\R)$ with $\|\ubu\|_{L^\infty} \leq \rho$ the set
\begin{align*}
    \Omega = \{\lambda \in \C : |\Re(\lambda)| \geq \eta_1\} \cup \{\lambda \in \C: |\Im(\lambda)| \geq \eta_2, \Re(\lambda) \neq 0\}
\end{align*}
belongs to the resolvent set of $\mathcal{L}(\ubu)$, i.e., we have $\sigma(\mathcal{L}(\ubu)) \cap \Omega = \emptyset$.
\end{Lemma}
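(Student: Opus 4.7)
The plan is to establish, for each $\lambda\in\Omega$, a coercive a priori estimate $\|v\|_{L^2}\leq C(\lambda)\,\|(\mathcal{L}(\ubu)-\lambda)v\|_{L^2}$ on $v\in H^2(\R,\C^2)$ together with the same estimate for the adjoint $\mathcal{L}(\ubu)^{*}=-L(\ubu)J$; by the closed range theorem, triviality of $\ker(\mathcal{L}(\ubu)-\lambda)$ and $\ker(\mathcal{L}(\ubu)^{*}-\bar\lambda)$ combined with closed range forces bijectivity, hence $\lambda\in\rho(\mathcal{L}(\ubu))$. Since $J^{-1}=-J$, the equation $(\mathcal{L}(\ubu)-\lambda)v=w$ is equivalent to $(L(\ubu)+\lambda J)v=g$ with $g:=-Jw$ and $\|g\|=\|w\|$, so it suffices to bound $\|v\|$ by $\|g\|$. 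I would test this rewritten equation in $L^{2}(\R,\C^{2})$ against $v$ and against $Jv$ and separate real and imaginary parts. Writing $W=W(\ubu)$ for the real-symmetric matrix-valued coefficient in $L(\ubu)=-\partial_x^{2}+\zeta-W$, for which $\|W\|_{L^\infty}\leq c\rho^{2}$ for a universal constant $c$, the standing identities are that $\langle L(\ubu)v,v\rangle=\|v'\|^{2}+\zeta\|v\|^{2}-\langle Wv,v\rangle\in\R$, $\langle Jv,v\rangle=\iu\beta$ for some $\beta\in\R$ with $|\beta|\leq\|v\|^{2}$, and, using $J^{*}=-J$ together with $\{W,J\}=\mathrm{tr}(W)J$, both $|\Re\langle Wv,Jv\rangle|$ and $|\Im\langle Wv,Jv\rangle|$ are controlled by $c\rho^{2}\|v\|^{2}$; after integration by parts, the $-\partial_x^{2}+\zeta$-contribution to $\langle L(\ubu)v,Jv\rangle$ is purely imaginary with imaginary part bounded by $\|v'\|^{2}+|\zeta||\beta|$.

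For the region $\{|\Re\lambda|\geq\eta_{1}\}$ the real part of the pairing with $Jv$ alone suffices: only $-\langle Wv,Jv\rangle$ contributes to $\Re\langle L(\ubu)v,Jv\rangle$, yielding $|\Re\lambda|\,\|v\|^{2}\leq\|g\|\,\|v\|+c\rho^{2}\|v\|^{2}$, which closes upon choosing $\eta_{1}:=2c\rho^{2}$. For the more delicate region $\{\Re\lambda\neq 0,\,|\Im\lambda|\geq\eta_{2}\}$ I would couple three scalar identities. First, the imaginary part of the pairing with $v$ reads $(\Re\lambda)\beta=\Im\langle g,v\rangle$ and, since $\Re\lambda\neq 0$, yields $|\beta|\leq\|g\|\|v\|/|\Re\lambda|$. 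Substituted into the real part of the same pairing, this bounds $\|v'\|^{2}\leq(c\rho^{2}+|\zeta|)\|v\|^{2}+\bigl(1+|\Im\lambda|/|\Re\lambda|\bigr)\|g\|\|v\|$. Finally, the imaginary part of the pairing with $Jv$ reads $|\Im\lambda|\,\|v\|^{2}\leq\|g\|\,\|v\|+\|v'\|^{2}+|\zeta||\beta|+c\rho^{2}\|v\|^{2}$, and combining these three yields $(|\Im\lambda|-2c\rho^{2}-|\zeta|)\|v\|\leq\bigl(2+(|\Im\lambda|+|\zeta|)/|\Re\lambda|\bigr)\|g\|$, which closes with $\eta_{2}:=2c\rho^{2}+|\zeta|+1$ and $C(\lambda)<\infty$ whenever $\Re\lambda\neq 0$. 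Surjectivity comes from the identical argument applied to the adjoint: under $u\mapsto Ju$ the equation $(\mathcal{L}(\ubu)^{*}-\bar\lambda)u=0$ becomes $(L(\ubu)-\bar\lambda J)w=0$, which is of the same form as before with $\lambda$ replaced by $-\bar\lambda$, and since $|\Re(-\bar\lambda)|=|\Re\lambda|$ and $|\Im(-\bar\lambda)|=|\Im\lambda|$, the same coercive estimate gives $\ker(\mathcal{L}(\ubu)^{*}-\bar\lambda)=\{0\}$.

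The main obstacle is the subtle cancellation required in the second region. The term $(\Im\lambda)\beta$ appearing in the real part of the pairing with $v$ naively scales like $|\Im\lambda|$ and would ruin the estimate, but the imaginary part of that same pairing furnishes the compensating bound $|\beta|\leq\|g\|\|v\|/|\Re\lambda|$, which is available precisely because $\Re\lambda\neq 0$. This coupling fails on the imaginary axis -- the resulting $\lambda$-dependent constant $C(\lambda)$ degenerates as $\Re\lambda\to 0$ -- which explains both why the hypothesis $\Re\lambda\neq 0$ is essential in the second region and why that region of $\Omega$ cannot be extended to $\Re\lambda=0$.
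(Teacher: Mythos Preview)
Your argument is correct and takes a genuinely different route from the paper's proof. You obtain both regions of $\Omega$ via direct energy estimates: testing the rewritten equation $(L(\ubu)+\lambda J)v=g$ against $v$ and against $Jv$, exploiting that $\langle Jv,v\rangle$ is purely imaginary and that the principal part of $\langle L(\ubu)v,Jv\rangle$ has vanishing real part, and then closing via the closed-range argument and the adjoint. The paper instead treats the first region by viewing $\mathcal{L}(\ubu)$ as a bounded perturbation of the skew-adjoint operator $-J\partial_x^2$, invoking Stone's theorem to obtain the resolvent bound $\|(-J\partial_x^2-\lambda)^{-1}\|\leq 1/|\Re\lambda|$ and closing by a Neumann series; for the second region it conjugates by the matrix $S$ diagonalizing $J$, recasts the problem in terms of a self-adjoint Schr\"odinger operator $A$ bounded from below and a bounded off-diagonal coupling $B$, and then appeals to an external result (Theorem~4 of~\cite{Bengel2024Stability}).

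Your approach is more self-contained (no external black box for the high-$|\Im\lambda|$ regime) and, incidentally, yields an $\eta_1$ independent of $\zeta$, since the $\zeta$-contribution to $\Re\langle L(\ubu)v,Jv\rangle$ vanishes. The paper's approach is somewhat cleaner conceptually in the first region (pure perturbation theory, no pairings) and in the second region offloads the delicate coupling---precisely the cancellation you identify as the main obstacle---to a cited lemma. The mention of $\{W,J\}=\mathrm{tr}(W)J$ is a nice structural observation but not actually needed: the crude bound $|\langle Wv,Jv\rangle|\leq\|W\|_{L^\infty}\|v\|^2$ already controls both real and imaginary parts.
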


\begin{proof}
The densely defined skew-adjoint operator $-J\partial_x^2 \colon H^2(\R) \subset L^2(\R) \to L^2(\R)$ generates a unitary group on the Hilbert space $L^2(\R)$ by Stone's theorem, see~\cite[Theorem~II.3.24]{Engel2000One}. In particular,~\cite[Theorem~I.1.10]{Engel2000One} yields the resolvent bound
\begin{align} \label{resolventprincipal}
    \left\|\left(-J\partial_x^2 - \lambda\right)^{-1}\right\|_{L^2 \to L^2} \leq \frac{1}{|\mathrm{Re}(\lambda)|}
\end{align}
for $\lambda \in \C$ with $|\mathrm{Re}(\lambda)| > 0$. The residual $\mathcal{L}(\ubu) + J\partial_x^2$ can be bounded as
\begin{align*}
    \left\|\left(\mathcal{L}(\ubu) + J\partial_x^2\right) \ub \right\|_{L^2} &\leq  \left(|\zeta| + 4 \|\ubu\|_{L^\infty}^2\right) \|\ub\|_{L^2} \leq C_1 \|\ub\|_{L^2} ,
\end{align*}
for all $\ub \in H^2(\R)$, where we set $C_1 = |\zeta| + 4 \rho^2$. By estimate~\eqref{resolventprincipal} there exists a constant $\eta_1 > 0$, depending on $\rho$ and $\zeta$ only, such that for $\lambda \in \C$ with $|\mathrm{Re}(\lambda)| \geq \eta_1$ we have
\begin{align*}
    C_1 \left\|(-J\partial_x^2 - \lambda)^{-1}\right\|_{L^2 \to L^2} < 1.
\end{align*} 
Therefore, $\mathcal{L}(\ubu) - \lambda = - J\partial_x^2 - \lambda + \big(\mathcal{L}(\ubu) + J\partial_x^2\big)$ is by~\cite[Theorem~IV.1.16]{Kato1995Perturbation} bounded invertible for each $\lambda \in \C$ with $|\mathrm{Re}(\lambda)| \geq \eta_1$. This yields
\begin{align*}
    \{\lambda \in \C : |\Re(\lambda)| \geq \eta_1\} \cap \sigma(\mathcal{L}(\ubu)) = \emptyset.
\end{align*}

Next, let $\lambda \in \C$ and $\mathbf{w} \in L^2(\R)$. Consider the resolvent problem
\begin{align}\label{resolvent_problem}
    (\mathcal{L}(\ubu) -\lambda) \ub = \mathbf{w}.
\end{align}
We show that there exists $\eta_2 >0$, depending on $\rho$ and $\zeta$ only, such that~\eqref{resolvent_problem} has a unique solution $\ub \in H^2(\R)$ provided $|\Im(\lambda)|\geq \eta_2$ and $\Re(\lambda) \neq 0$. To this end, we observe that the resolvent problem~\eqref{resolvent_problem} has a unique solution $\ub \in H^2(\R)$ if and only if the conjugate problem
\begin{align}\label{resolvent_problem_complex}
	(\widetilde{\mathcal{L}}(\ubu) -\lambda) \widetilde{\ub} = \widetilde{\mathbf{w}}
\end{align}
posses a unique solution $\widetilde{\ub} \in H^2(\R)$, where we denote $\widetilde{\mathcal{L}}(\ubu) = \widetilde{J} \widetilde{L}(\ubu)$ with $\widetilde{J} = S^{-1} J S$, $\widetilde{L}(\ubu) = S^{-1} L(\ubu) S$, $\widetilde{\ub} = S^{-1} \ub$, $\widetilde{\mathbf{w}} = S^{-1} \mathbf{w}$ and
\begin{align*}
    S = \frac{1}{2}
    \begin{pmatrix}
        1 & 1\\ -\iu & \iu
    \end{pmatrix} \in \C^{2\times 2}.
\end{align*}
We note that conjugation with the matrix $S$ corresponds to a coordinate transform transferring the formulation~\eqref{LLE_system} of~\eqref{LLE} as a system in $(\Re(u),\Im(u))^\top$ into its formulation as a system in $(u,\overline{u})^\top$. One readily computes
\begin{align*}
		\widetilde{J} = 
		\begin{pmatrix}
			-\iu & 0 \\ 0 & \iu
		\end{pmatrix}, \qquad
		\widetilde{L}(\ubu) = 
		\begin{pmatrix}
			-\partial_x^2 + \zeta - 2 |\tilde{\ubu}|^2 & - \tilde{\ubu}^2 \\
			-\overline{\tilde{\ubu}}^2 & -\partial_x^2 + \zeta - 2 |\tilde{\ubu}|^2 
		\end{pmatrix},
\end{align*}
where we denote $\tilde{\ubu} = \ubu_1 + \iu \ubu_2$. We define the operators $A\colon H^2(\R) \subset L^2(\R) \to L^2(\R)$ and $B\colon L^2(\R)\to L^2(\R)$ by $A u = -u'' + (\zeta - 2 |\tilde{\ubu}|^2) u$ and $Bu = - \tilde{\ubu}^2 u$, respectively. Then, using integration by parts we infer that the self-adjoint operator $A$ can be bounded from below as
\begin{align*}
    \langle A u,u \rangle_{L^2} \geq \|u'\|_{L^2}^2 + (\zeta - 2 \|\tilde{\ubu}\|_{L^\infty}^2) \|u\|_{L^2}^2
    \geq (\zeta- 2 \rho^2) \|u\|_{L^2}^2
\end{align*}
for all $u \in H^2(\R)$. We note that the lower bound of $A$ depends on $\rho$ and $\zeta$ only. Clearly, $B$ is a bounded operator with $\|B\|_{L^2 \to L^2} \leq \rho^2$. Employing~\cite[Theorem 4]{Bengel2024Stability}, we find constants $C_2,\eta_2 >0$, depending on $\rho$ and $\zeta$ only, such that for every $\widetilde{\mathbf{w}} \in L^2(\R)$ the problem~\eqref{resolvent_problem_complex} has a unique solution $\widetilde{\ub} \in H^2(\R)$ with 
\begin{align*}\|\widetilde{\ub}\|_{L^2} \leq C_2 \frac{\|\widetilde{\mathbf{w}}\|_{L^2}}{|\Re(\lambda)|},\end{align*} 
provided that $\Re(\lambda) \neq 0$ and $|\Im(\lambda)| \geq \eta_2.$ Thus, we have proved
\begin{align*}
    \{\lambda \in \C : |\Im(\lambda)| \geq \eta_2, \Re(\lambda) \neq 0\} \cap \sigma(\mathcal{L}(\ubu)) = \emptyset
\end{align*}
and the claim follows.
\end{proof}

\subsection{Stability of \texorpdfstring{$N$-pulse}{N-pulse} solutions}\label{ssec:stab_N-pulse}

We determine the spectral stability of the $N$-pulse solutions established in Theorem~\ref{thm:ex_N-pulse}. If one of the primary $1$-pulse solutions constituting the $N$-pulse is spectrally unstable, spectral instability of the $N$-pulse follows from~\cite{BengeldeRijk}. On the other hand, if the $N$-pulse is comprised of $N$ spectrally stable $1$-pulses, then we employ Lemma~\ref{lem:a-priori_spectrum} and~\cite[Lemma~3.3]{BengeldeRijk}, to confine the spectral stability problem to a small ball $B_{\delta_0}(0)$ centered at the origin. Spectral stability is then decided by the $N$ small eigenvalues in $B_{\delta_0}(0)$, whose position with respect to the imaginary axis is described by Theorem~\ref{thm:ex_N-pulse}. All in all, we arrive at the following result. 

\begin{Theorem}\label{thm:stab_N-pulse}
Let $N \in \N$ and $\ell \in \{0,\ldots,N-1\}$. Set $n = {\lfloor \frac{N}{2} \rfloor}$ and $\alpha_0 = N\! \mod 2 \in \{0,1\}$. Let $\ubu_1,\ldots,\ubu_{n+\alpha_0}$ be a sequence of stationary $1$-pulse solutions to~\eqref{LLE_system}, established in Theorem~\ref{thm:1-pulse}. For each $\mathbf{k} \in \N^n$, we denote by $\ubu_{\mathbf{k},\ell}$ the associated $N$-pulse solutions to~\eqref{LLE_system}, established in Theorem~\ref{thm:ex_N-pulse}.

The following assertions hold. 
\begin{itemize}
    \item[(i)] If $\ubu_1,\ldots,\ubu_{n+\alpha_0}$ are spectrally stable solutions of~\eqref{LLE_system}, then there exists a constant $k_0 \in \N$ such that, whenever $\mathbf{k} = (k_1,\ldots,k_n) \in \N^n$ satisfies $\min\{k_1,\ldots,k_n\} \geq k_0$, the $N$-pulse solution $\ubu_{\mathbf{k},\ell}$ of~\eqref{LLE_system} is spectrally stable if $\ell = N-1$ and spectrally unstable if $\ell \neq N-1$.
    \item[(ii)] Assume that there exists $i \in \{1,\ldots,n+\alpha_0\}$ such that $\ubu_i$ is a spectrally unstable solution of~\eqref{LLE_system}, then there exists a constant $k_0 \in \N$ such that, whenever $\mathbf{k} = (k_1,\ldots,k_n) \in \N^n$ satisfies $\min\{k_1,\ldots,k_n\} \geq k_0$, the $N$-pulse solution $\ubu_{\mathbf{k},\ell}$ of~\eqref{LLE_system} is spectrally unstable.
\end{itemize}
\end{Theorem}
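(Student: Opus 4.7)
The plan is to combine the a-priori spectral bounds of Lemma~\ref{lem:a-priori_spectrum} with the Evans-function convergence results from~\cite{BengeldeRijk} to confine the stability question to a small ball around the origin, where Theorem~\ref{thm:ex_N-pulse} already pins down the location of the $N$ small eigenvalues.

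First, the approximation estimate~\eqref{boundmultipulse} together with the $L^\infty$-bounds on the primary pulses $\ubu_i$ yields a uniform bound $\|\ubu_{\mathbf{k},\ell}\|_{L^\infty} \leq \rho$ with $\rho$ independent of $\mathbf{k} \in \N^n$. Applying Lemma~\ref{lem:a-priori_spectrum} (to $\El(\ubu_{\mathbf{k},\ell})$, with the $\eps$-shift only translating the spectrum horizontally) gives constants $\eta_1,\eta_2 > 0$ such that the portion of $\sigma(\El(\ubu_{\mathbf{k},\ell}) - \eps)$ lying in the closed right half-plane is contained in the compact rectangle $K = \{\lambda \in \C : 0 \leq \Re(\lambda) \leq \eta_1 + \eps,\, |\Im(\lambda)| \leq \eta_2\}$, uniformly in $\mathbf{k}$. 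The same bound applies to the primary $1$-pulse operators $\El(\ubu_i) - \eps$. Since $\delta_0 > 0$ is fixed by Theorem~\ref{thm:ex_N-pulse}, the set $K \setminus B_{\delta_0}(0)$ is compact, and this is precisely the region on which the Evans-function convergence result of~\cite{BengeldeRijk} can be invoked: as $\min\{k_1,\dots,k_n\} \to \infty$, the Evans function of the multipulse converges, uniformly on compact subsets of $\C$, to the product of the Evans functions of the $n+\alpha_0$ primary $1$-pulses (each appearing with the correct multiplicity to account for symmetric copies), and the presence/absence of zeroes in compact regions is inherited in the limit.

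For part (ii), assume some $\ubu_i$ is spectrally unstable, so its Evans function vanishes at some $\lambda_0 \in \C$ with $\Re(\lambda_0) > 0$. Hence the limiting product Evans function vanishes at $\lambda_0$, and by Rouch\'e's theorem (combined with the uniform convergence on $K \setminus B_{\delta_0}(0)$, noting that $\lambda_0$ can be kept away from $0$ since it has strictly positive real part) the $N$-pulse Evans function also has zeroes near $\lambda_0$ for $\min\{k_1,\ldots,k_n\}$ sufficiently large. As these zeroes correspond to genuine $L^2$-eigenvalues of $\El(\ubu_{\mathbf{k},\ell}) - \eps$ in the open right half-plane, spectral instability of $\ubu_{\mathbf{k},\ell}$ follows.

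For part (i), all $\ubu_i$ are spectrally stable, so the limiting product Evans function is nonvanishing on the closed right half-plane intersected with $K$, except at $\lambda = 0$. Uniform convergence on the compact set $K \setminus B_{\delta_0}(0)$ therefore guarantees, for $\min\{k_1,\ldots,k_n\}$ large, that the $N$-pulse has no spectrum in $\{\lambda \in \C : \Re(\lambda) \geq 0\} \setminus B_{\delta_0}(0)$. The remaining spectrum of $\El(\ubu_{\mathbf{k},\ell}) - \eps$ in $B_{\delta_0}(0)$ is described completely by Theorem~\ref{thm:ex_N-pulse}: there are exactly $N$ eigenvalues, of which $\ell$ sit in the open left half-plane, $N-1-\ell$ in the open right half-plane, and one is algebraically simple at $0$. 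Combined with the a-priori bound, spectral stability of $\ubu_{\mathbf{k},\ell}$ is thus equivalent to $N-1-\ell = 0$, that is $\ell = N-1$, while any other choice of $\ell$ forces at least one positive eigenvalue and hence spectral instability.

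The main obstacle is ensuring that the Evans-function convergence result of~\cite{BengeldeRijk} applies in the precise form required here, namely uniformly on $K \setminus B_{\delta_0}(0)$ with the approximation parameter being $\min\{k_1,\ldots,k_n\}$ as the distances $T_{i,\ell}^{k_i}$ diverge. This requires verifying that the hypotheses of that result---nondegeneracy of the $N$-homoclinic, hyperbolicity and saddle-focus structure of $U_{\infty,\eps}$, and exponential pulse separation---are exactly those supplied by Theorem~\ref{thm:ex_N-pulse} and Proposition~\ref{prop:saddle_focus}, together with the monotonicity $T_{i,\ell}^{k_i} \to \infty$. Identifying zeros of the Evans function with $L^2$-eigenvalues of $\El(\ubu_{\mathbf{k},\ell}) - \eps$ is then standard owing to the exponential localization of the underlying homoclinics.
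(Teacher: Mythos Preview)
Your approach is essentially the same as the paper's: a-priori bounds from Lemma~\ref{lem:a-priori_spectrum} to confine the unstable spectrum to a compact rectangle, Evans-function results from~\cite{BengeldeRijk} to clear that rectangle away from the origin, and Theorem~\ref{thm:ex_N-pulse} to resolve the $N$ eigenvalues in $B_{\delta_0}(0)$. Part~(ii) is fine as written.

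There is, however, a genuine gap in part~(i). The definition of spectral stability used in the paper requires a strict spectral gap: one needs $\tau > 0$ with $\sigma(\El(\ubu_{\mathbf{k},\ell})-\eps) \subset \{\Re(\lambda) \leq -\tau\} \cup \{0\}$, not merely $\sigma \cap \{\Re(\lambda) \geq 0\} = \{0\}$. Your rectangle $K$ and the Evans-function argument are restricted to $\{\Re(\lambda) \geq 0\}$, so you have only shown the latter. Nothing in your argument excludes, say, a sequence of eigenvalues outside $B_{\delta_0}(0)$ with real parts tending to $0$ from the left. The fix is immediate and is exactly what the paper does: use the spectral stability of the primary pulses to obtain $\tau > 0$ with $\sigma(\El(\ubu_i)-\eps) \cap \{\Re(\lambda) > -\tau\} = \{0\}$, enlarge the compact region to $\{-\tau \leq \Re(\lambda) \leq \eta_1,\ |\Im(\lambda)| \leq \eta_2\} \setminus B_{\delta_0}(0)$ (the a-priori bounds still apply since Lemma~\ref{lem:a-priori_spectrum} controls spectrum in $\{\Re(\lambda) > -\eps\}$ after the shift), and invoke~\cite{BengeldeRijk} on that set. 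This pushes all spectrum with $\Re(\lambda) \geq -\tau$ into $B_{\delta_0}(0)$, and Theorem~\ref{thm:ex_N-pulse} finishes the job exactly as you describe.
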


\begin{proof}
We start with the proof of the first assertion. Let $\delta_0 > 0$ be as in Theorem~\ref{thm:ex_N-pulse}. Using the bound~\eqref{boundmultipulse} in Theorem~\ref{thm:ex_N-pulse} and applying Lemma~\ref{lem:a-priori_spectrum}, we find $\mathbf{k}$-independent constants $\eta_1,\eta_2>0$ such that
\begin{align*}
    \sigma(\El(\ubu_{\mathbf{k},\ell})-\eps) \cap \{\lambda \in \C : \Re(\lambda) \geq -\tfrac{\eps}{2}\} \subset \{\lambda \in \C : - \tfrac{\eps}{2} \leq \Re(\lambda) \leq \eta_1, |\Im(\lambda)| \leq \eta_2\}.
\end{align*}
On the other hand, by the spectral stability of $\ubu_i$, there exists a $\mathbf{k}$-independent constant $\tau > 0$ such that
\begin{align*}
    \sigma(\El(\ubu_i)-\eps) \cap \{\lambda \in \C : \Re(\lambda) > -\tau\} = \{0\}
\end{align*}
for $i = 1,\ldots,n+\alpha_0$. Hence,~\cite[Lemma~3.3]{BengeldeRijk} yields a constant $k_0 \in \N$ such that, provided $\mathbf{k} = (k_1,\ldots,k_n) \in \N^n$ satisfies $\min\{k_1,\ldots,k_n\} \geq k_0$, the compact set
\begin{align*}
\{\lambda \in \C : -\tau \leq \Re(\lambda) \leq \eta_1, |\Im(\lambda)| \leq \eta_2\} \setminus B_{\delta_0}(0)
\end{align*}
lies in the resolvent set of $\El(\ubu_{\mathbf{k},\ell})-\eps$. We conclude that all spectrum of $\El(\ubu_{\mathbf{k},\ell})-\eps$ in the half plane $\{\lambda \in \C : \Re(\lambda) \geq -\tau\}$ must be confined to the ball $B_{\delta_0}(0)$. The spectrum of $\El(\ubu_{\mathbf{k},\ell})-\eps$ in the ball $B_{\delta_0}(0)$ consists of precisely $\ell$ eigenvalues of negative real part, $N-1-\ell$ eigenvalues of positive real part and one algebraically simple eigenvalue $0$ by Theorem~\ref{thm:ex_N-pulse} (all counted with algebraic multiplicities). This yields the first assertion. 

For the second assertion, we observe that if $\ubu_i$ is spectrally unstable, then there exists $\lambda_0 \in \sigma(\El(\ubu_i) - \eps)$ with $\Re(\lambda_0) > 0$. By~\cite[Lemma~4]{Bengel2024Stability} the essential spectrum of $\El(\ubu_j) - \eps$ lies on the line $\{\lambda \in \C : \Re(\lambda) = -\eps\}$ for $j = 1,\ldots,n+\alpha_0$. Combining the last two lines with~\cite[Theorem~6.2]{BengeldeRijk} yields a constant $k_0 \in \N$ such that, provided $\mathbf{k} = (k_1,\ldots,k_n) \in \N^n$ satisfies $\min\{k_1,\ldots,k_n\} \geq k_0$, $\El(\ubu_{\mathbf{k},\ell}) - \eps$ possesses an eigenvalue $\lambda$ of real part $\Re(\lambda) \geq \Re(\lambda_0)/2 > 0$, which proves the second assertion. 
\end{proof}

\begin{Remark} \label{rem:short_time_instability}
As mentioned in Remark~\ref{rem:long_term_instability}, the $N$-pulses, established in Theorem~\ref{thm:ex_N-pulse}, can suffer from long-time instabilities triggered by the $N$  eigenvalues near the origin, which are exponentially small with respect to the distances between pulses.  On the other hand, if one of the primary 1-pulses constituting the $N$-pulse is spectrally unstable, the proof of Theorem~\ref{thm:stab_N-pulse} shows that we find an eigenvalue $\lambda$ of the linearization of~\eqref{LLE_system} about the $N$-pulse of real part $\Re(\lambda) \geq \Re(\lambda_0)/2 > 0$. That is, the real part of this eigenvalue obeys a positive lower bound, which is independent of the distances between the $1$-pulses constituting the $N$-pulse. Therefore, the instability in Theorem~\ref{thm:stab_N-pulse}.(ii) can be interpreted as a \emph{short-time instability}. 
\end{Remark}

The asymptotic orbital stability of spectrally stable $N$-pulses is now a direct consequence of the following statement, which was proved in~\cite{Bengel2024Stability}. 

\begin{Theorem}[\!{\!\cite[Theorem 3]{Bengel2024Stability}}] \label{thm:nonstab_Npulse}
Let $\ub_\infty \in \R^2$ and let $\ubu \colon \R \to \R^2$ be a smooth stationary solution to~\eqref{LLE_system} such that $\ubu(x)$ converges to $\ub_\infty$ as $x \to \pm \infty$. If $\ubu$ is spectrally stable, then there exist constants $C,\delta,\eta > 0$ such that for all $\mathbf{v}_0 \in H^1(\R)$ with $\|\mathbf{v}_0\|_{H^1} \leq \delta$ there exist a constant $\gamma \in \R$ and a function $\mathbf{v} \in C\big([0,\infty),H^1(\R)\big)$ with $\mathbf{v}(0) = \mathbf{v}_0$ such that $\ub = \ubu + \mathbf{v}$ is the unique global mild solution of~\eqref{LLE_system} with $\ub(0) = \ubu + \mathbf{v}_0$ enjoying the estimate
\begin{align*}
\left\|\ub - \ubu(\cdot + \gamma)\right\|_{H^1} \leq C\eu^{-\eta t} \delta
\end{align*}
for all $t \geq 0$. 
\end{Theorem}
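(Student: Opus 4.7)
The plan is to combine the spectral gap implied by spectral stability with a modulation ansatz that absorbs the translational neutral mode, and then close a nonlinear decay estimate via Duhamel's formula. First I write the perturbed solution as $\ub(x,t) = \ubu(x + \gamma(t)) + \mathbf{w}(x,t)$ with $\gamma(0) = O(\|\mathbf{v}_0\|_{H^1})$ chosen via the implicit function theorem so that $\mathbf{w}(0)$ lies in the range of $I - P_0$, where $P_0$ is the Riesz projection of $\El(\ubu) - \eps$ onto its algebraically simple kernel $\spann\{\ubu'\}$. Since $\ubu'$ and the corresponding adjoint eigenfunction $\psi$ are exponentially localized (the asymptotic state is hyperbolic by the saddle-focus property underlying the homoclinic, cf.~Proposition~\ref{prop:saddle_focus}), the projection $P_0$ is bounded on $H^1(\R)$. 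The function $\mathbf{w}$ then solves
\begin{equation*}
\mathbf{w}_t = (\El(\ubu) - \eps)\mathbf{w} - \dot\gamma\, \ubu'(\cdot + \gamma) + \mathcal{N}(\gamma, \mathbf{w}),
\end{equation*}
where the remainder satisfies $\|\mathcal{N}(\gamma, \mathbf{w})\|_{H^1} \lesssim (|\gamma| + \|\mathbf{w}\|_{H^1})\|\mathbf{w}\|_{H^1}$, via Sobolev embedding $H^1(\R) \hookrightarrow L^\infty(\R)$ and the boundedness of $\ubu$.

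Next I impose the gauge condition $P_0 \mathbf{w}(t) = 0$ for all $t \geq 0$. Differentiating this constraint yields an ODE $\dot\gamma = g(\gamma, \mathbf{w})$ with $|g(\gamma, \mathbf{w})| \lesssim (|\gamma| + \|\mathbf{w}\|_{H^1})\|\mathbf{w}\|_{H^1}$, where the coefficient of $\dot\gamma$ is nonzero by nondegeneracy: algebraic simplicity of the zero eigenvalue of $\El(\ubu)-\eps$ implies $\langle \ubu', \psi \rangle_{L^2} \neq 0$. The essential linear input is the semigroup estimate
\begin{equation*}
\left\|e^{t(\El(\ubu) - \eps)}(I - P_0)\right\|_{H^1 \to H^1} \leq C e^{-\eta t}, \qquad t \geq 0,
\end{equation*}
for some $\eta \in (0, \tau)$. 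In $L^2$ this follows from the Gearhart-Pr\"uss characterization of exponential stability, applied to the spectral gap $\sigma(\El(\ubu)-\eps) \subset \{\Re\lambda \leq -\tau\} \cup \{0\}$ together with the uniform high-frequency resolvent bound of Lemma~\ref{lem:a-priori_spectrum} and spectral continuity for bounded $\Im\lambda$; the $H^1$ decay is then obtained by a commutator argument, since $\partial_x$ commutes with $\El(\ubu) - \eps$ up to bounded multiplication operators in $\ubu$ and $\ubu'$.

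With these ingredients, applying $I - P_0$ to the equation and invoking Duhamel's formula gives
\begin{equation*}
\mathbf{w}(t) = e^{t(\El(\ubu)-\eps)}\mathbf{w}(0) + \int_0^t e^{(t-s)(\El(\ubu)-\eps)}(I - P_0)\mathcal{N}(\gamma(s), \mathbf{w}(s))\, ds.
\end{equation*}
Setting $M(t) = \sup_{0 \leq s \leq t} e^{\eta s}\|\mathbf{w}(s)\|_{H^1}$ and combining the exponential semigroup decay with the quadratic bound on $\mathcal{N}$ and an integrated bound on $|\gamma(t)|$ derived from the ODE for $\dot\gamma$, a standard continuation/bootstrap argument yields a quadratic inequality of the form $M(t) \leq C\|\mathbf{v}_0\|_{H^1} + CM(t)^2$, which closes to $M(t) \lesssim \|\mathbf{v}_0\|_{H^1}$ uniformly in $t \geq 0$ for sufficiently small $\|\mathbf{v}_0\|_{H^1}$. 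Then $\dot\gamma$ is exponentially integrable, $\gamma(t) \to \gamma_\infty$ with $|\gamma_\infty| \lesssim \|\mathbf{v}_0\|_{H^1}$, and the asymptotic shift in the statement is $\gamma_\infty$. The main obstacle is establishing the $H^1$-semigroup decay: because $\El(\ubu)-\eps$ is a bounded perturbation of the skew-adjoint dispersive operator $J(-\partial_x^2 + \zeta)$, no parabolic smoothing is available, so the decay must be extracted from the Gearhart-Pr\"uss criterion using the resolvent bounds of Lemma~\ref{lem:a-priori_spectrum}, and the $L^2 \to H^1$ upgrade requires a separate commutator step that depends on the precise structure of $\El(\ubu)$.
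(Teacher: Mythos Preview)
The paper does not supply its own proof of this theorem: it is stated verbatim as a citation of \cite[Theorem~3]{Bengel2024Stability} and no proof environment follows. So there is nothing in the present paper to compare your argument against.

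That said, your outline is the standard modulation-plus-semigroup route and is essentially the argument carried out in \cite{Bengel2024Stability}. A couple of small points. First, your appeal to Proposition~\ref{prop:saddle_focus} for exponential localization of $\ubu'$ and the adjoint eigenfunction is fine for the primary $1$-pulse, but Theorem~\ref{thm:nonstab_Npulse} is stated for an arbitrary spectrally stable pulse; the localization should instead be argued directly from hyperbolicity of the asymptotic matrix $\partial_U F(U_\infty;\eps)$ and exponential dichotomies, as in the proof of Lemma~\ref{lem:nondegeneracy}. Second, the Gearhart--Pr\"uss step requires not just the spectral a-priori bound of Lemma~\ref{lem:a-priori_spectrum} but a uniform resolvent bound along vertical lines in the right half-plane; in the paper this is attributed to \cite[Theorem~4]{Bengel2024Stability} (invoked inside the proof of Lemma~\ref{lem:a-priori_spectrum}), and you should cite that rather than Lemma~\ref{lem:a-priori_spectrum} itself, which only asserts absence of spectrum. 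With those adjustments your sketch matches the approach of the cited reference.
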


\subsection{Stability of periodic \texorpdfstring{$N$-pulse}{N-pulse} solutions}

We now turn to the stability analysis of the periodic multipulse solutions to~\eqref{LLE_system}, whose existence, as outlined in~\S\ref{ssec:ex_N-pulse}, follows by combining Theorems~\ref{thm:ex_N-pulse} and~\ref{thm:ex_periodic}. Our first step is to establish \emph{diffusive spectral stability} of these solutions, which, in turn, yields their nonlinear stability against localized perturbations and against subharmonic perturbations of any wavelength.\footnote{We note that diffusive spectral stability is a standard assumption in the nonlinear stability analysis of periodic traveling or steady waves in dissipative systems, see~\cite{Haragus2023Nonlinear} and further references therein.} 

Diffusive spectral stability is defined in terms of the $1$-parameter family of Bloch operators associated with the linearization $\El(\ubu) - \eps$ of system~\eqref{LLE_system} about a periodic smooth stationary solution $\ubu = (\ubu_1,\ubu_2)^\top$ with period $T > 0$. These Bloch operators $\El_\xi(\ubu)-\eps \colon H_\per^2(0,T) \subset L_\per^2(0,T) \to L_\per^2(0,T)$ are given by 
\begin{align*}
    L_\xi(\ubu)= - (\partial_x + \iu \xi T)^2 +\zeta - 
	\begin{pmatrix}
		3 \ub_{1}^2 + \ub_{2}^2 & 2 \ub_{1} \ub_{2} \\
		2 \ub_{1} \ub_{2} &  \ub_{1}^2 + 3\ub_{2}^2
	\end{pmatrix}, \qquad \mathcal{L}_\xi(\ubu) = JL_\xi(\ubu)
\end{align*}
with $\xi \in [-\pi, \pi)$. We then have the well-known spectral decomposition
\begin{align} \label{characterization_periodic_spectrum}
    \sigma(\El(\ubu)-\eps ) = \bigcup_{\xi \in \left[-\pi,\pi\right)} \sigma\left(\El_\xi(\ubu)-\eps\right),
\end{align}
cf.~\cite{Gardner1993Structure}. The definition of diffusive spectral stability now reads as follows. 

\begin{Definition} \label{def:diffusive_spectral_stability}
A smooth stationary $T$-periodic solution $\ubu \colon \R \to \R^2$ of~\eqref{LLE_system} is \emph{diffusively spectrally stable} provided the following conditions hold:
\begin{itemize}
    \item[(i)] We have $\sigma(\El(\ubu)-\eps ) \subset \{\lambda \in \C : \Re(\lambda) < 0\} \cup \{0\}$;
    \item[(ii)] There exists $\vartheta>0$ such that for all $\xi \in [-\pi,\pi)$ we have $\Re\left(\sigma\left(\El_\xi(\ubu)-\eps\right)\right) \leq - \vartheta \xi^2$;
    \item[(iii)] $0$ is a simple eigenvalue of the Bloch operator $\El_0(\ubu)-\eps$.
\end{itemize}
\end{Definition}

Diffusive spectral stability of the periodic multipulse solutions to~\eqref{LLE_system}, studied in this paper, can be obtained if the underlying multipulse is spectrally stable. In this case, we can apply the a-priori bounds in Lemma~\ref{lem:a-priori_spectrum} and the results in~\cite{BengeldeRijk,Gardner1997Spectral}, to preclude any unstable spectrum outside a small ball $B_{\varrho}(0)$ of radius $\varrho > 0$ centered at the origin. By~\cite{BengeldeRijk,Gardner1997Spectral} the spectrum inside the ball $B_{\varrho}(0)$ is given by a single smooth curve $\{\lambda_0(\xi) : \xi \in [-\pi,\pi)\}$, which touches the origin by translational invariance. Leading-order control on this critical spectral curve, provided by~\cite{Sandstede2001Stability}, then yields constants $\widetilde{T}_0,\varpi > 0$ and a partitioning of the half line $[\widetilde{T}_0,\infty)$ in intervals $I_j = (\widetilde{T}_0 + j \varpi, \widetilde{T}_0 + (j+1) \varpi)$, $j \in \N$ such that diffusive spectral stability holds if the period $T$ lies in $I_j$ with $j$ even, whereas spectral instability holds for $T \in I_j$ if $j$ is odd. That is, the stability of the multipulse train alternates with the period. On the other hand, if the underlying multipulse solution is spectrally unstable, then the associated multipulse trains are spectrally unstable by~\cite{BengeldeRijk,Gardner1997Spectral} for \emph{all} periods $T > 0$ sufficiently large. 

\begin{Theorem}\label{thm:stab_periodic_pulse}
Let \begin{align*}\un{U} = (\un{U}_1,\un{U}_2,\un{U}_3,\un{U}_4)^\top \colon \R \to \R^4\end{align*} be a nondegenerate symmetric homoclinic solution of~\eqref{LLE_DS} connecting to a saddle-focus equilibrium $U_\infty \in \R^4$, as established in Theorem~\ref{thm:ex_N-pulse}. Let $\{\un{U}_T\}_{T \geq T_0}$ be the corresponding family of smooth $T$-periodic symmetric solutions 
\begin{align*}\un{U}_T = (\un{U}_{T,1},\un{U}_{T,2},\un{U}_{T,3},\un{U}_{T,4})^\top \colon \R \to \R^4\end{align*} of~\eqref{LLE_DS}, established in Theorem~\ref{thm:ex_periodic}. Denote by $\ubu = (\un{U}_1,\un{U}_2)^\top, \ubu_T = (\un{U}_{T,1},\un{U}_{T,2})^\top \colon \R \to \R^2$ the associated stationary solutions of~\eqref{LLE_system}.

The following assertions hold.
\begin{itemize}
\item[(i)] If $\ubu$ is a spectrally stable pulse solution of~\eqref{LLE_system}, then there exist constants $\widetilde{T}_0,C,\tau,\delta,a >0$ and $b \in \R$, an open set $U \subset \C$ containing the real interval $[-\pi,\pi]$ and an analytic map $\lambda_0 \colon U \to \C$ such that for all $T \geq \widetilde{T}_0$ we have
\begin{align} \label{period_spec_bound3}
    \sigma(\mathcal{L}(\ubu_T) - \eps) \cap \{\lambda\in\C : \Re(\lambda) \geq -\tau\} = \{\lambda_0(\xi) : \xi\in [-\pi,\pi) \}.
\end{align}
Moreover, $\lambda_0(\xi) = \lambda_0(-\xi)$ is a real-valued algebraically simple eigenvalue of the Bloch operator $\El_\xi(\ubu_T) - \eps$ for each $\xi \in [-\pi,\pi)$ and $T \geq \widetilde{T}_0$. Finally, the estimate
\begin{align} \label{approx_criticalcurve}
\left|\lambda_0(\xi) - a (\cos(\xi)-1) \eu^{-2 \alpha T } \sin(2\beta T +b)\right| \leq C |\eu^{\iu \xi} - 1| \eu^{-(2\alpha+\delta)T}
\end{align}
holds for all $\xi \in U$ and $T \geq \widetilde{T}_0$, where $\alpha,\beta > 0$ are as in Proposition~\ref{prop:saddle_focus}.

If $T \geq \widetilde{T}_0$ is such that $\sin(2\beta T+b)>0$, then $\ubu_T$ is diffusively spectrally stable as a periodic stationary solution to~\eqref{LLE_system}. Moreover, if $T \geq \widetilde{T}_0$ is such that $\sin(2\beta T+b)<0$, then $\ubu_T$ is spectrally unstable as a stationary solution to~\eqref{LLE_system}.
\item[(ii)] Assume that $\ubu$ is spectrally unstable. Then, there exists a constant $\widetilde{T}_0 > 0$ such that for all $T \geq \widetilde{T}_0$ the stationary solution $\ubu_T$ of~\eqref{LLE_system} is spectrally unstable. In particular, $\El_\xi(\ubu_T) - \eps$ possesses spectrum in the open right-half plane for all $\xi \in [-\pi,\pi)$.
\end{itemize}
\end{Theorem}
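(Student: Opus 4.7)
The plan is to prove both parts by combining the $T$-uniform a-priori bound from Lemma~\ref{lem:a-priori_spectrum}, the Evans-function persistence theory of~\cite{BengeldeRijk,Gardner1997Spectral} (transferring spectral information from the underlying homoclinic to the bifurcating periodic orbits), and the Lin's-method expansions for the critical Bloch eigenvalue in~\cite{Sandstede2001Stability}. The common starting observation is that by~\eqref{eq:limit_periodic_to_homoclinic} the family $\{\ubu_T\}_{T\geq T_0}$ is uniformly bounded in $L^\infty$, so Lemma~\ref{lem:a-priori_spectrum} confines any portion of $\sigma(\mathcal{L}(\ubu_T)-\eps)$ lying in a fixed right half-plane to a $T$-independent compact set.

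For part~(i), spectral stability of $\ubu$ yields $\tau>0$ with $\sigma(\mathcal{L}(\ubu)-\eps)\cap\{\Re\lambda\geq -\tau\}=\{0\}$ and $0$ algebraically simple. Combined with the above compactness, the Evans-function persistence results of~\cite{BengeldeRijk,Gardner1997Spectral} show that for $T$ large the full spectrum of $\mathcal{L}(\ubu_T)-\eps$ in $\{\Re\lambda\geq -\tau\}$ is confined to a small ball $B_\varrho(0)$ about the origin, where, via the Bloch decomposition~\eqref{characterization_periodic_spectrum} and an analytic Evans-function construction, it is parametrized by a single analytic curve $\lambda_0(\xi)$ bifurcating from the simple translational zero of the homoclinic Evans function. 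This gives~\eqref{period_spec_bound3} together with the algebraic simplicity of $\lambda_0(\xi)$ as an eigenvalue of $\mathcal{L}_\xi(\ubu_T)-\eps$. The symmetry $\lambda_0(\xi)=\lambda_0(-\xi)$ follows from reversibility: evenness of $\ubu_T$ (inherited from the symmetry of $\un{U}_T$) ensures that $\mathcal{L}(\ubu_T)-\eps$ commutes with spatial reflection, which under Bloch decomposition intertwines $\mathcal{L}_\xi(\ubu_T)-\eps$ with $\mathcal{L}_{-\xi}(\ubu_T)-\eps$. Reality $\lambda_0(\xi)\in\R$ then follows by combining this with the fact that the real coefficients of system~\eqref{LLE_system} yield $\overline{\sigma(\mathcal{L}_\xi(\ubu_T)-\eps)}=\sigma(\mathcal{L}_{-\xi}(\ubu_T)-\eps)$.

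The leading-order expansion~\eqref{approx_criticalcurve} is then obtained from~\cite{Sandstede2001Stability}, adapted to the present reversible saddle-focus setting: the jump-matching equation for the small Bloch eigenvalue reduces to a scalar bifurcation equation whose leading term factors as a discrete Bloch-monodromy contribution proportional to $\cos\xi-1$ times an asymptotic factor generated by the saddle-focus eigenvalues $\pm\alpha\pm\iu\beta$ of Proposition~\ref{prop:saddle_focus}; the two complex-conjugate rates produce the oscillatory correction $\eu^{-2\alpha T}\sin(2\beta T+b)$. Once~\eqref{approx_criticalcurve} is in hand, the stability dichotomy is immediate: when $\sin(2\beta T+b)>0$ the bound $\cos\xi-1\leq -\xi^2/3$ for $\xi$ near $0$ gives $\Re\lambda_0(\xi)\leq -\vartheta\xi^2$ for some $\vartheta>0$, and together with $\lambda_0(0)=0$ and simplicity this verifies all three conditions of Definition~\ref{def:diffusive_spectral_stability}; when $\sin(2\beta T+b)<0$ the curve $\lambda_0$ lies in $\{\Re\lambda>0\}$ away from the origin, yielding spectral instability.

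For part~(ii), the spectral instability of $\ubu$ supplies an unstable eigenvalue $\lambda_u$ of $\mathcal{L}(\ubu)-\eps$; since the essential spectrum of $\mathcal{L}(\ubu)-\eps$ is confined to the line $\{\Re\lambda=-\eps\}$ by~\cite[Lemma~4]{Bengel2024Stability}, $\lambda_u$ is isolated. The Bloch Evans function of the periodic problem depends analytically on $(\lambda,\xi)$ and converges, uniformly on compact subsets of $\{\Re\lambda>-\eps\}$ and uniformly in $\xi\in[-\pi,\pi]$, to the homoclinic Evans function as $T\to\infty$. Applying~\cite[Theorem~6.2]{BengeldeRijk} Bloch-frequency-wise then produces, for each $\xi$, a zero of the Bloch Evans function near $\lambda_u$ with real part at least $\Re(\lambda_u)/2>0$, uniformly in $\xi$ by continuity and compactness, which yields the claim. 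The main technical hurdle is the part~(i) expansion~\eqref{approx_criticalcurve}: propagating the Bloch parameter $\xi$ through Lin's-method bookkeeping while correctly capturing the saddle-focus oscillation $\sin(2\beta T+b)$, and simultaneously invoking reversibility to secure both reality and $\xi$-symmetry of $\lambda_0$, is the delicate step; the Evans-function persistence arguments underlying both parts are by comparison routine once the a-priori bounds of Lemma~\ref{lem:a-priori_spectrum} are in place.
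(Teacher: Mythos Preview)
Your overall architecture matches the paper's proof: confine the spectrum via Lemma~\ref{lem:a-priori_spectrum}, use the Evans-function persistence from~\cite{BengeldeRijk,Gardner1997Spectral} to reduce to a single critical Bloch curve near zero, and then invoke~\cite{Sandstede2001Stability} for the expansion~\eqref{approx_criticalcurve}. Two points, however, need attention.

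First, to apply~\cite[Theorem~5.6]{Sandstede2001Stability} you must verify that the homoclinic $\un{U}$ is not in an inclination-flip configuration; the paper does this by the same dimension-counting argument used in the proof of Theorem~\ref{thm:ex_N-pulse} (the saddle-focus eigenvalues $\pm\alpha\pm\iu\beta$ leave no room for strong (un)stable submanifolds). You do not mention this hypothesis, and without it the leading-order expansion is not guaranteed. Incidentally, the paper obtains the reality and $\xi$-symmetry of $\lambda_0$ from the same reference rather than arguing them directly; your direct argument is fine, but the inclination-flip check is still required for~\eqref{approx_criticalcurve}.

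Second, your derivation of the quadratic bound $\lambda_0(\xi)\leq -\vartheta\xi^2$ from~\eqref{approx_criticalcurve} via $\cos\xi-1\leq -\xi^2/3$ does not work as written: the error term on the right of~\eqref{approx_criticalcurve} is only $O(|\eu^{\iu\xi}-1|)=O(|\xi|)$, which dominates any quadratic quantity near $\xi=0$. The paper circumvents this by first applying Cauchy's estimate to the analytic bound~\eqref{approx_criticalcurve} on the open set $U$ to conclude $\lambda_0''(0)<0$ (the error term contributes $O(\eu^{-(2\alpha+\delta)T})$ to the second derivative, which is beaten by the leading contribution $-a\,\eu^{-2\alpha T}\sin(2\beta T+b)$ for large $T$). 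Then $\lambda_0(0)=\lambda_0'(0)=0$ and analyticity give the quadratic bound locally, while~\eqref{approx_criticalcurve} handles $\xi$ away from $0$ directly. For part~(ii), note that the paper invokes~\cite[Theorem~7.2]{BengeldeRijk}, the periodic-orbit analogue, rather than Theorem~6.2, which is tailored to multipulses.
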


\begin{proof}
We start with the proof of the first assertion. Since the stationary pulse solution $\ubu$ of~\eqref{LLE_system} is spectrally stable, there exists a constant $\tau_0>0$ such that
\begin{align} \label{specstab_pulse}
    \sigma(\El(\ubu)-\eps) \cap \{\lambda \in \C : \Re(\lambda)\geq - \tau_0\} = \{0\},
\end{align}
and $0$ is an algebraically simple eigenvalue of $\El(\ubu) - \eps$. Moreover, by the bound~\eqref{eq:limit_periodic_to_homoclinic} in Theorem~\ref{thm:ex_periodic}, the $L^\infty$-norm of the solution $\ubu_T$ can be bounded by a $T$-independent constant for $T \geq T_0$. Therefore, Lemma~\ref{lem:a-priori_spectrum}, yields a $T$-independent compact set $\mathcal{K}\subset \C$ such that
\begin{align} \label{period_spec_bound2}
    \sigma(\El(\ubu_T)-\eps) \cap \{\lambda \in \C : \Re(\lambda) \geq -\tfrac{\eps}{2}\} \subset \mathcal{K}.
\end{align}
for all $T \geq T_0$. Take $\varrho \in (0,\min\{\frac{\eps}{2},\tau_0\})$ and set $\tilde{\mathcal{K}} := \{\lambda \in \mathcal{K} : \Re(\lambda) \geq -\tau_0\} \setminus B_\varrho(0)$. Using that $\tilde{\mathcal{K}}$ is compact and~\eqref{specstab_pulse} holds,~\cite[Lemma~4.2]{BengeldeRijk} yields, provided $T \geq T_0$ is sufficiently large, that
\begin{align} \label{periodic_spec_bound}
     \sigma(\El(\ubu_T)-\eps) \cap \tilde{\mathcal{K}} = \emptyset
\end{align}
and, moreover,
\begin{align} \label{bloch_spec_bound}
\sigma(\El_\xi(\ubu_T)-\eps) \cap B_\varrho(0) = \{\lambda_0(\xi)\},
\end{align}
where $\lambda_0(\xi)$ is an algebraically simple eigenvalue of the Bloch operator $\El_\xi(\ubu_T) - \eps$ for each $\xi \in [-\pi,\pi)$. Hence, using that the Bloch operators depend analytically on $\xi$, it follows from standard analytic perturbation theory, see~\cite[Sections~II.1 and~VII.3]{Kato1995Perturbation}, that there exists an open neighborhood $U \subset \C$ of the real interval $[-\pi,\pi]$ such that $\lambda_0(\xi)$ can be extended to an analytic map $\lambda_0 \colon U \to \C$. By the same dimension-counting argument as in the proof of Theorem~\ref{thm:ex_N-pulse}, the (un)stable manifolds along the homoclinic $\un{U}$ connecting to the saddle-focus equilibrium $U_\infty$ in the dynamical system~\eqref{LLE_DS} are not in an inclination-flip configuration. Therefore, provided $T > 0$ is sufficiently large,~\cite[Theorem 5.6]{Sandstede2001Stability} yields that $\lambda_0(\xi)$ is real-valued, we have $\lambda_0(\xi) = \lambda_0(-\xi)$ for all $\xi\in [-\pi,\pi)$ and the approximation~\eqref{approx_criticalcurve} holds with $a > 0$ (after possibly shifting $b \mapsto b + \pi$). Finally, the identities~\eqref{characterization_periodic_spectrum},~\eqref{period_spec_bound2},~\eqref{periodic_spec_bound} and~\eqref{bloch_spec_bound} imply~\eqref{period_spec_bound3} with $\tau = \min\{\tau_0,\eps/2\}>0$.

If $T > 0$ is sufficiently large with $\sin(2\beta T + b) > 0$, then Cauchy's estimate in conjunction with the bound~\eqref{approx_criticalcurve} yield $\lambda_0''(0) < 0$. Hence, combining the latter with~\eqref{approx_criticalcurve} and $\lambda_0(0) = 0 = \lambda_0'(0)$, we infer that, provided $T > 0$ is sufficiently large, there exists $\vartheta > 0$ such that $\lambda_0(\xi) \leq -\vartheta \xi^2$ for all $\xi \in [-\xi,\xi]$. This, together with~\eqref{period_spec_bound3}, implies diffusive spectral stability of $\ubu_T$ as a periodic stationary solution to~\eqref{LLE_system}. On the other hand, if $T > 0$ is sufficiently large with $\sin(2\beta T + b) < 0$, then by estimate~\eqref{approx_criticalcurve} there exists $\xi \in [-\pi,\pi) \setminus \{0\}$ such that $\lambda_0(\xi) > 0$. Therefore, $\ubu_T$ is spectrally unstable as a stationary solution to~\eqref{LLE_system} upon recalling~\eqref{period_spec_bound3}. This finishes the proof of the first assertion.

We proceed with proving the second assertion. Assume that $\ubu$ is spectrally unstable. Then, there exists $\lambda_0 \in \sigma(\El(\ubu) - \eps)$ with $\Re(\lambda_0) > 0$. By~\cite[Lemma~4]{Bengel2024Stability} the essential spectrum of $\El(\ubu) - \eps$ is confined to the line $\{\lambda \in \C : \Re(\lambda) = -\eps\}$. Therefore, we can apply~\cite[Theorem~7.2]{BengeldeRijk} to yield that, provided $T > 0$ is sufficiently large, $\sigma(\El_\xi(\ubu_T) - \eps)$ possesses for each $\xi \in [-\pi,\pi)$ an element $\lambda_*(\xi)$ of real part $\Re(\lambda_*(\xi)) \geq \Re(\lambda_0)/2 > 0$, which proves the second assertion. 
\end{proof}

\begin{Remark}
If the underlying multipulse solution is spectrally unstable, then Theorem~\ref{thm:stab_periodic_pulse}.(ii) shows that each of the Bloch operators $\El_\xi(\ubu_T) - \eps$ with $\xi \in [-\pi,\pi)$ possesses spectrum in the open right-half plane. That is, the multipulse train $\ubu_T$ is spectrally unstable against subharmonic perturbations of any wavelength. In particular, it is spectrally unstable against co-periodic perturbations. On the other hand, if the underlying multipulse solution is spectrally stable, then Theorem~\ref{thm:stab_periodic_pulse}.(i) yields that the spectrum of the Bloch operator $\El_0(\ubu_T) - \eps$ lies in the open left-half plane except for the algebraically simple eigenvalue $\lambda_0(0) = 0$. That is, the multipulse train $\ubu_T$ is spectrally stable against co-periodic perturbations. Moreover, if in addition $\sin(2\beta T + b) < 0$ holds, we find $\lambda_0''(0) > 0$ by applying Cauchy's estimate to the bound~\eqref{approx_criticalcurve}. Hence, using that $\lambda_0(0), \lambda_0'(0) = 0$ holds by Theorem~\ref{thm:stab_periodic_pulse}, we infer that $\El_\xi(\ubu_T) - \eps$ possesses unstable spectrum for each $\xi \in [-\pi,\pi) \setminus \{0\}$ sufficiently small. We conclude that the multipulse train is sideband unstable, i.e., it is spectrally unstable against subharmonic perturbations of sufficiently large wavelength.
\end{Remark}

\begin{Remark}
Similarly as in the case for multipulse solutions, cf.~Remark~\ref{rem:short_time_instability}, we can distinguish between short- and long-time instabilities of multipulse trains. On the one hand, the critical spectral curve $\lambda_0(\xi)$ in Theorem~\ref{thm:stab_periodic_pulse}.(i) is exponentially small in terms of the period $T$ by estimate~\eqref{approx_criticalcurve}. Thus, any instabilities arising from this curve can be interpreted as long-time instabilities of the multipulse train. On the other hand, if the underlying multipulse solution is spectrally unstable, then the proof of Theorem~\ref{thm:stab_periodic_pulse}.(ii) shows that the linearization of~\eqref{LLE_system} about the multipulse train admits unstable spectrum, whose real part can be bounded from below by a positive bound, independent of $T$. Thus, these instabilities can be interpreted as short-time instabilities. 
\end{Remark}

The proof of our main result, Theorem~\ref{t:mainresult}, now follows by combining the diffusive spectral stability result in Theorem~\ref{thm:stab_periodic_pulse} with~\cite{Haragus2023Nonlinear,Stanislavova2018Asymptotic}. 

\begin{proof}[Proof of Theorem~\ref{t:mainresult}]
Since it holds $8 \zeta < \pi^2 f^2$, $\pi f \cos \theta_0 = 2 \sqrt{2 \zeta}$ and $\sin \theta_0 > 0$, Theorem~\ref{thm:1-pulse} provides constants $C_0,\eps_0 > 0$ such that for each $\eps \in (0,\eps_0)$ there exist an asymptotic state $\ub_{\infty,\eps}$ and an even spectrally stable smooth stationary $1$-pulse solution $\ubu_\eps$ of~\eqref{LLE_system} obeying~\eqref{estimates_1_pulse}. 

Fix $\eps \in (0,\eps_0)$. We apply Theorems~\ref{thm:ex_N-pulse} and~\ref{thm:stab_N-pulse} with $\ell = N-1$ to yield constants $C_1,k_0 > 0$ such that for each $\mathbf{k} = (k_1,\ldots,k_n) \in \N^n$ with $\min\{k_1,\ldots,k_n\} \geq k_0$ there exist distances $T_{i,\eps}^{k_1},\ldots,T_{n,\eps}^{k_n}$ and an even spectrally stable smooth stationary $N$-pulse solution $\ubu_{\mathbf{k},\eps}$ of~\eqref{LLE_system} enjoying the estimate
\begin{align*}
\begin{split}
&\left|\ubu_{\mathbf{k},\eps}(x) - \ub_{\infty,\eps} - \alpha_0 \left(\ubu_\eps(x) - \ub_{\infty,\eps}\right) - \sum_{i = 1}^n \left(\ubu_\eps\left(x - T_{1,\eps}^{k_1} - \ldots - T_{i,\eps}^{k_i}\right)\right. \right.\\ 
&\qquad \qquad \quad \qquad \qquad \quad \qquad \left.  \phantom{ \sum_{i = 1}^n }\left. +\, \ubu_\eps\left(x + T_{1,\eps}^{k_1} + \ldots + T_{i,\eps}^{k_i}\right) - 2\ub_{\infty,\eps}\right) \right| \leq \frac{C_1}{\min\{k_1,\ldots,k_n\}}
\end{split}
\end{align*}
for $x \in \R$. Here, the sequence $\{T_{i,\eps}^{k}\}_k$ of pulse distances is monotonically increasing with $T_{i,\eps}^k \to \infty$ as $k \to \infty$ for $i = 1,\ldots,n$. Combining the latter estimate with~\eqref{estimates_1_pulse} implies
\begin{align} \label{boundmultipulse3}
\begin{split}
&\left|\ubu_{\mathbf{k},\eps}(x) - \alpha_0\boldsymbol{\phi}_{\theta_0}(x) - \sum_{i = 1}^n \left(\boldsymbol{\phi}_{\theta_0}\left(x - T_{1,\eps}^{k_1} - \ldots - T_{i,\eps}^{k_i}\right) +  \boldsymbol{\phi}_{\theta_0}\left(x + T_{1,\eps}^{k_1} + \ldots + T_{i,\eps}^{k_i}\right) \right) \right|\\ 
&\qquad \leq 2C_0\eps 
\end{split}
\end{align}
for each $x \in \R$ and $\mathbf{k} \in \N^n$ with $\min\{k_1,\ldots,k_n\} \geq k_0$, 
upon taking $k_0 > 0$ larger if necessary.

Now, fix $\mathbf{k} \in \N^n$ with $\min\{k_1,\ldots,k_n\} \geq k_0$. By Proposition~\ref{prop:saddle_focus} and Theorems~\ref{thm:ex_periodic} and~\ref{thm:stab_periodic_pulse} there exists a monotonically increasing sequence of periods $\{L_{\mathbf{k},\eps}^m\}_m$ such that for each $m \in \N$ there exists an even diffusively spectrally stable smooth stationary periodic solution $\ubu_{m,\mathbf{k},\eps}(x)$ of~\eqref{LLE_system} of period $L_{\mathbf{k},\eps}^m$ satisfying the estimate
\begin{align} \label{boundperiodicpulse2}
\sup_{x \in \left[-\frac12 L_{\mathbf{k},\eps}^m, \frac12 L_{\mathbf{k},\eps}^m\right]} \left|\ubu_{m,\mathbf{k},\eps}(x) -  \ubu_{\mathbf{k},\eps}(x)\right| \leq C_0 \eps.
\end{align}
Here, the sequence $\{L_{\mathbf{k},\eps}^m\}_m$ tends to $\infty$ as $m \to \infty$ and obeys~\eqref{sum_period} for each $m \in \N$. Thus, we have established assertions~(i),~(ii) and~(iv). Moreover, assertion~(iii) follows readily by combing~\eqref{boundmultipulse3} and~\eqref{boundperiodicpulse2}. Assertion~(v) is a direct consequence of the diffusive spectral stability of $\ubu_{m,\mathbf{k},\eps}$ in combination with~\cite[Theorem~1]{Stanislavova2018Asymptotic}, see also~\cite[Theorem~1.2]{Haragus2024Nonlinear}. Similarly, assertion~(vi) follows immediately from~\cite[Theorem~1.3]{Haragus2023Nonlinear}. 
\end{proof}

\begin{Remark}
Diffusive spectral stability of $T$-periodic stationary solutions to the LLE even yields a nonlinear stability result~\cite{Haragus2024Nonlinear} against $MT$-periodic perturbations that is uniform in $M \in \mathbb{N}$, as well as nonlinear stability against nonlocalized phase modulations~\cite{Zumbrun2024Forward}. We refer to~\cite{Haragus2024Nonlinear,Zumbrun2024Forward} for further details.
\end{Remark}

\bibliographystyle{abbrv}
\bibliography{bibliography}

\end{document}